\newcommand{\piant}{\Phi_{*}\frac{\partial }{\partial t }}
\newcommand{\pians}{\Phi_{*}\frac{\partial }{\partial s }}
\newcommand{\yuankuo}[1]{\left( #1\right) }
\newcommand{\jiankuo}[1]{\left\langle  #1\right\rangle }
\newcommand{\covt}[1]{\tilde{\nabla}_{#1}}
\newcommand{\FF}{F^{\prime\prime}\yuankuo{\frac{|du_{s,t}|^2}{2}}}
\newcommand{\jifen}[1]{\int_M #1 \mathrm{d}v_{g}}
\theoremstyle{plain}
\newtheorem{thm}{Theorem}[section]
\newtheorem{cor}{Corollary}[section]
\newtheorem{lem}{Lemma}
\theoremstyle{definition}
\newtheorem{defn}{Definition}[section]
\newtheorem{rem}{Remark}
\numberwithin{equation}{section}
\def \d {\mathrm{d}}
\def \Vol{\mathrm{Vol}}
\title{Stability for $ F $ harmonic map with two form and potential versus Stability for $ F $ symphonic  map  with  potential}
\author{Xiangzhi Cao\thanks{School of Information Engineering, Nanjing Xiaozhuang University, Nanjing 211171, China}\thanks{This work is surported by General Project of Basic Science (Natural Science) Research of  Universities and Colleges  in Jiangsu province (Grant No.
		22KJD110004)}\footnote{Email:aaa7756kijlp@163.com}}
\begin{document}
		\maketitle
		\tableofcontents
		\begin{abstract}
			
			In this paper,  we consider the stability of $ F $-harmonic map with $ m $-form and potential into pinched manifold. We also consider the stability of $ F $-symphonic  map with potential form or into compact $\Phi$-SSU manifold. We also consider the stability of $ F $-symphonic  map with potential into pinched manifold.
			

		\end{abstract}
		
		{\small
			\noindent{\it Keywords and phrases}: Stability; $F$ harmonic map with two form and potential; $F $ symphonic  map with potential . 
			
			\noindent {\it MSC 2010}: 58E15; 58E20 ; 53C27
		}
		
		\section{Introduction }		
	Harmonic map is an important topic in differential geomeotry. For the research of stability on harmonic map, Xin\cite{xin1980some} proved the nonexistence of nonconstant stable harmonic map from sphere into compact manifold.   Leung\cite{leung1982stability} proved the nonexistence of nonconstant stable harmonic map from compact manifold into  sphere. In \cite{Howard1985}, Howard proved the nonexistence of stable harmonic map from compact Riemannian manifold to $ \delta(n) $ pinched Riemannian manifold.

	Let $ M $ and $  N $ be two Riemannian manifolds.	For $ u: (M,g)\to (N,h) $, 
		$$E(u)=\int_{M}F\left(\frac{|du|^{2}}{2}\right)\d v_{g},$$
		whose critical point is called $ F $ harmonic map. It is well known that  $ p $ harmonic map and expotential harmonic map is the special case of $ F $ harmonic map.
		
		 For the research on $ F $ harmonic map, monotonicity formula and stability problem are two important topic in this field. They are all related to Liouville type theorem for $ F $ harmonic map. Dong \cite{dong2016liouville}  proved several Liouville theorems for $F$-harmonic maps from some complete Riemannian manifolds by assuming some conditions on the Hessian of the 	distance function, the degrees of $ F(t) $ and the asymptotic behavior of the map at infinity. Liu\cite{MR2259738} obtained the Liouville type theorem for stable $F$-harmonic map and   improved the results in \cite{ara2001stability}.	 Liu \cite{Liu2005} obtained Liouville theorem for F harmonic map on noncompact manifolds via conservation law. Kassi \cite{kassi2006liouville} also discussed  Liouville theorem for $ F $ harmonic map  via conservation law. 
		
		Ara \cite{Mitsunori1999Geometry} introduced the notion $ F $ harmonic map and discussed the stability of $ F $ harmonic map from  compact Riemannian manifold to $ \mathbb{S}^n $.
		 In \cite{ara2001stability}, Ara  proved that every stable $ F $-harmonic map into sufficently pinched simply-connected Riemannian manifold is constant. In \cite{ara2001instability}, Ara studied the unstability and nonexistence
		of $ F $-harmonic maps and introduced the notion of $ F $-strongly unstable
		and $ F $-unstable manifolds and discuss properties of such manifolds. The stability of $ \alpha $-harmonic map was studied  in \cite{torbaghan2022stability}.

			Koh \cite{Koh} studied the existence of magnetic geodesic in Riemannian manifold using heat flow method.  Branding\cite{Br} introduced harmonic map with two form and potential and the existence problem is solved provided  the curvature of the targeted manifold satisfying some conditions. In \cite{zbMATH07375745}, we generalized the main result  in \cite{Br}.

		In this paper, in order to generalize harmonic map with two form and potential,  we introduce the following functional
	\begin{defn} For $ u: (M,g)\to (N,h) $,
		$$E(u)=\int_{M}F\left(\frac{|du|^{2}}{2}\right)+u^* B+H(u)\d v_{g},$$
		whose critical point is called $ F $ harmonic map with $ m $ form and potential. In this paper, we will prove its	Euler-Lagrange equation is 
			\begin{equation}
			\delta^\nabla\left( F^{\prime}\left(\frac{|du|^{2}}{2}\right)du\right) - Z_{x}\left(du(e_1)\wedge \cdots \wedge du(e_m)\right)-\nabla H(u)=0.
		\end{equation}
	where $\{e_1,e_2,\cdots, e_m\} $ is the orthonormal basis of $M^m$.
	\end{defn}	
	
	In this paper, we will establish Howard type result and Okayasu type result for $ F $-harmonic with two form and potential. We will use the method  to study the stability of   $ F $-harmonic with two form and potential.

In  \cite{nakauchi2011variational},  Nackauchi et al. 
 defined symphonic map wich is the critical point of 
\begin{equation*}
	\begin{split}
		E(u)=\int_M \frac{|u^{*}h|^2}{4}\d v_g. 
	\end{split}
\end{equation*} 
In	\cite{kawai2011some}, Kawai and  Nakauchi  studied the stability of symphonic map  from or into $ \mathbb{S}^{n}. $ In \cite{zbMATH07069207}, Kawai and Nakauchi  obtaind Liouvill theorem for symphonic map in terms of the curvature conditions of the domain manifold or target manifold. In \cite{Misawa2023}, Misawa and Nakauchi gived finite-time blow-up phenomenon of rotationally symmetric solutions to the symphonic map flow. In \cite{Nakauchi2022}, Nakauchi obtained Liouville theorem for symphonic map in terms of $ m $-symphonic energy. In \cite{Nakauchi2019}, Nakauchi gived the stress energy tensor of symphonic map(see \cite{nakauchi2011variational}).

			\begin{defn}For $ u: (M,g)\to (N,h) $,
			$$\Phi_{sym}(u)=\int_{M}F\left(\frac{|u^*h|^{2}}{4}\right)+H(u)\d v_g,$$
			whose critical point is called $ F $-symphonic map with potential .	Euler-Lagrange equation is 
			\begin{equation}
				\delta^\nabla\left( (F^{\prime}\left(\frac{|u^*h|^{2}}{2}\right)\sigma_{u} \right)-\nabla H(u)=0,
			\end{equation}
		where $ \sigma_{u}(\cdot)=h(du(\cdot),du(e_i))du(e_i). $ When $ H=0 ,F(x)=x$, $ F $-symphonic map is also called $ \Phi $-harmonic map(see \cite{Han2019HarmonicMA}) or symphonic map.
		\end{defn}

 When $ H=0, $ in \cite{han2014monotonicity}, Han et al.  studied the stability of $ F $-symphonic map in the case where	the domain  manifold is $ S^n $ and target manifold is sphere. In \cite{Li2017NonexistenceOS}, Li et al. also studied the case where the domain manifold is  compact convex hypersurface and the target manifold is convex surface .

	In 	\cite{Howard1986},     Howard and Weil studied strongly unstable manifold  for submanifold in Euclidean space. Strongly unstable manifold is related to Liouville type theorem for stable harmonic map. In \cite{ara2001instability}, Ara studied $ SSU $ manifold.
	
	\begin{defn}[c.f. \cite{Han213213123123}\cite{Han2019HarmonicMA} ]
		A Riemannian manifold $M^{m}$ is said to be $\Phi$-superstrongly unstable ($\Phi$-SSU) if there exists an isometric immersion of $M^{m}$ in $R^{m+p}, p>0$ with its second fundamental form $B$ such that for all unit tangent vectors $X$ to $M^{m}$ at every point $p \in M$, the following inequality holds:
				\[
		\left\langle Q_{x}^{M}(X), X\right\rangle_{M}=\sum_{i=1}^{m}\bigg(4\left\langle B\left(X, e_{i}\right), B\left(X, e_{i}\right)\right\rangle-\left\langle B(X, X), B\left(e_{i}, e_{i}\right)\right\rangle\bigg)<0,
		\]
		where $\left\{e_{i}\right\}_{i=1}^{m}$ is the local orthogonal frame field near $y$ of $M$.		
	\end{defn}
\begin{rem}
	The dimension of any compact $ \Phi $-SSU manifolds $ N $ is
	greater than 4, see \cite[Theorem 7.1]{Han2019HarmonicMA}.
\end{rem}
	\begin{rem}
		$ \Phi$-$ SSU  $ manifold was studied in  (\cite{Han213213123123}\cite{Han2019HarmonicMA} ).  $\Phi$-SSU manifold was introduced by Han and Wei in \cite{Han2019HarmonicMA} and some interesting examples of $\Phi$-$S S U$ manifolds were studied.
	\end{rem}
	\begin{rem}
Here, we list some examples of $ \Phi $-SSU manifold,

		(1) 	By \cite[Corollary 5.1]{Han2019HarmonicMA}, the graph of $ f(x)=x_1^2+\cdots+x_n^2,x=(x_1,\cdots,x_n)\in \mathbb{R}^{n+1} $ is $ \Phi $-SSU manifold if and only if $ n>4. $
		
		(2)	By \cite[Corollary 5.2]{Han2019HarmonicMA},The standard sphere$  S^n $ is $ \Phi $-SSU  if and only if $ n > 4 $.
		
		(3) By Theorem 5.1 in \cite{Han2019HarmonicMA} . Let $N$ be a hypersurface in Euclidean space. Then $N$ is $\Phi$-SSU if and only if its principal curvatures satisfy
		\[
		0<\lambda_{1} \leq \lambda_{2} \leq \cdots \leq \lambda_{n}<\frac{1}{3}\left(\lambda_{1}+\cdots+\lambda_{n-1}\right)
		\]
		
		(4) \cite[Theorem 5.2]{Han2019HarmonicMA}. Let $\tilde{N}$ be a compact convex hypersurface of $\mathbb{R}^{q}$ and $N$ be a compact connected minimal $k$-submanifold of $\widetilde{N}$. Assuming that the principal curvatures $\lambda_{i}$ of $\tilde{N}$ satisfy $0<\lambda_{1} \leq \cdots \leq \lambda_{q-1}$. If $\left\langle\operatorname{Ric}^{N}(\mathrm{x}), \mathrm{x}\right\rangle>\frac{3}{4} k \lambda_{q-1}^{2}$ for any unit tangent vector $\mathrm{x}$ to $N$, then $N$ is $\Phi-\mathrm{SSU}$.
		
		(5) \cite[Theorem 5.3]{Han2019HarmonicMA}. A compact minimal $k$-submanifold $N$ of an ellipsoid $E^{q-1}$ in $\mathbb{R}^{q}$ with $\left\langle\operatorname{Ric}^{N}(\mathrm{x}), \mathrm{x}\right\rangle>\frac{3}{4} \frac{\left(\max _{1 \leq i \leq q}\left\{a_{i}\right\}\right)^{2}}{\left(\min _{1 \leq i \leq q}\left\{a_{i}\right\}\right)^{4}} k$ for any unit tangent vector $\mathrm{x}$ of $N$ is $\Phi$-SSU.
	
	(6)	\cite[Corollary 5.4]{Han2019HarmonicMA}. A compact minimal $k$-submanifold $N$ of the unit sphere $S^{q-1}$ with $\left\langle\operatorname{Ric}^{N}(\mathrm{x}), \mathrm{x}\right\rangle>\frac{3}{4} k$ for any unit tangent vector $\mathrm{x}$ to $N$ is $\Phi$-SSU. Thus $N$ satisfies Theorem $1.1(a),(b),(c)$, and $(d)$.
	
	(7)\cite[Theorem 5.4]{Han2019HarmonicMA}. Let $N$ be a compact $k$-submanifold of the unit sphere $S^{q-1}$ in $\mathbb{R}^{q}, k>4$, and Let $\mathrm{B}_{1}$ be the second fundamental form of $N$ in $S^{q-1}$. If
	\[
	\left\|\mathrm{B}_{1}\right\|^{2}<\frac{k-4}{\sqrt{k}+4},
	\]
	then $N$ is $\Phi$-SSU.
	
	(8) \cite[Theorem 9.1]{Han2019HarmonicMA}. Let $N=G / H$ be a compact irreducible homogeneous space of dimension $n$ with first eigenvalues $\lambda_{1}$ and scalar curvature Scal $^{N}$. Then
 $N$ is $\Phi$-SSU  if and only if 	 $\lambda_{1}<\frac{4}{3 n}$ Scal $^{N}$.
	\end{rem}

In section  \ref{sec4} of  this paper, we  will prove  the stability of  $ F $-symphonic map with potential  from or into compact $\Phi$-SSU manifold.  We will also get the Howard type result or Okayasu type result for $ F $-symphonic map with potential  into $ \delta $-pinched manifold. We will use the method in \cite{torbaghan2022stability} to study the stability of  $ F $-symphonic map with potential .

	Throughout this paper,  $c_{F}:=\inf \left\{c \geq 0 |\frac{F^{\prime}(t)}{t^{c}} \quad \text{is nonincreasing} \right\}$, thus we have  $ F^{\prime\prime}(x)x \leq  c_F F^{\prime}(x).$ This inequality will be used  often hereafter.
		
		\section{$F$ harmonic map with $ m $ form and potential when  targeted manifold is $ \delta $-pinched }\label{sec2}
		
		Let $ (N,h) $ be a Riemannian manifold. 	Now, we  define a $(k+1)$-form $\Omega \in \Gamma\left(\Lambda^{k+1} T^{*} N\right)$ a smooth vector bundle homomorphism $Z: \Lambda^{k} TN \rightarrow TN$, by the equation
		$$
		\left\langle\eta, Z_{x}\left(\xi_{1} \wedge \cdots \wedge \xi_{k}\right)\right\rangle_h=\Omega_{x}\left(\eta, \xi_{1}, \ldots, \xi_{k}\right), dB=\Omega,
		$$
		for all $x \in M$ and all $\eta, \xi_{1}, \ldots, \xi_{k} \in T_{x} N$, $B \in \Gamma\left(\Lambda^{k} T^{*} N\right)$. 
		\begin{lem}[c.f.\cite{koh2008evolution}]	
			Let $ u_t:(M^m,g)\to (N^n,h) $ be  a smooth deformation of u such that such that $ u_0=u, v=\frac{\partial u_t}{\partial t}|_{t=0} $  and	$B \in \Gamma\left(\Lambda^{m} T^{*} M\right),$		
		\begin{equation}\label{}
			\begin{split}
				\left.\int_{\Sigma} \frac{d}{d t}\right|_{t=0} \varphi_{t}^{*} B=
							&\int_{\Sigma} \Omega\left(v,(d \varphi)^{\underline{k}}\left(\mathrm{vol}_{g}^{\sharp}\right)\right) \d v_{g},
			\end{split}
		\end{equation}
	where $ (d u)^{\underline{m}}\left(\mathrm{vol}_{g}^{\sharp}\right):=du(e_1)\wedge du(e_2)\cdots du(e_m) .$	
	\end{lem}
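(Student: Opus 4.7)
The plan is to lift the one-parameter family to the product $M \times I$ and reduce to a routine application of the bi-graded exterior calculus on $M \times I$. I would introduce the smooth map $\Phi \colon M \times (-\epsilon, \epsilon) \to N$ defined by $\Phi(x,t) = \varphi_t(x)$, so that the slice restriction is $\Phi|_{M \times \{t\}} = \varphi_t$ and $d\Phi(\partial_t)|_{t=0} = v$. Pulling the $m$-form $B$ back via $\Phi$ yields an $m$-form on $M \times I$, which I would decompose uniquely as $\Phi^{*} B = \alpha_t + dt \wedge \beta_t$, where $\alpha_t$ is a $t$-parametrized $m$-form on $M$ not involving $dt$ and $\beta_t$ is a $t$-parametrized $(m-1)$-form on $M$. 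By construction the slice pullback gives $\alpha_t = \varphi_t^{*} B$.

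Next I would take the exterior derivative on $M \times I$, writing $d = d_M + dt \wedge \partial_t$ in the natural bi-grading. A short computation (using the sign $d(dt \wedge \beta_t) = -dt \wedge d_M \beta_t$) gives
\[
d(\Phi^{*} B) = d_M \alpha_t + dt \wedge \bigl(\partial_t \alpha_t - d_M \beta_t\bigr).
\]
On the other hand $d(\Phi^{*} B) = \Phi^{*} dB = \Phi^{*} \Omega$. Since $\Omega$ is an $(m+1)$-form while $\dim M = m$, the $dt$-free component of $\Phi^{*} \Omega$ must vanish, so $\Phi^{*} \Omega = dt \wedge \delta_t$ for a unique $m$-form $\delta_t$ on $M$ depending smoothly on $t$. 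Matching bi-degrees yields the key identity $\partial_t \alpha_t - d_M \beta_t = \delta_t$.

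Integrating over the closed manifold $M$ (or using the compactness of the support of the variation) kills $d_M \beta_t$ by Stokes' theorem, so $\tfrac{d}{dt}\int_M \varphi_t^{*} B = \int_M \delta_t$. It remains only to identify $\delta_t$ pointwise: evaluating on an orthonormal frame $\{e_1,\dots,e_m\}$ at a point gives
\[
\delta_t(e_1, \ldots, e_m) = \Phi^{*}\Omega(\partial_t, e_1, \ldots, e_m) = \Omega\bigl(\partial_t \varphi_t,\, d\varphi_t(e_1), \ldots, d\varphi_t(e_m)\bigr).
\]
Setting $t = 0$ produces exactly the asserted integrand $\Omega\bigl(v, (du)^{\underline{m}}(\mathrm{vol}_g^{\sharp})\bigr)\, dv_g$. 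The only step requiring real care is the bi-degree bookkeeping on $M \times I$ — especially the sign in the $dt$-wedge differential — but once that is set up correctly, everything else is naturality of pullback and one application of Stokes' theorem, so I do not expect a genuine obstacle.
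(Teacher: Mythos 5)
Your argument is correct, and in fact the paper offers no proof of this lemma at all --- it is simply quoted from the cited reference --- so there is nothing in the text to compare against except the standard argument, which is exactly the one you give: lift to $M\times I$, split $\Phi^{*}B$ by bi-degree, use $d(\Phi^{*}B)=\Phi^{*}\Omega$ together with the vanishing of the $dt$-free $(m+1)$-form component on the $m$-dimensional slices, and discard the exact term $d_M\beta_t$ by Stokes. The bookkeeping (the sign in $d(dt\wedge\beta_t)=-dt\wedge d_M\beta_t$ and the identification of $\int_M\delta_t$ with $\int_M\delta_t(e_1,\dots,e_m)\,\mathrm{d}v_g$ via an oriented orthonormal frame) is handled correctly, so the proposal stands as a complete proof.
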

	We also have  
		\begin{thm}[The first varation formula, c.f. ]
			
			Let $ u_t:(M^m,g)\to (N^n,h) $ be  a smooth deformation of u such that such that $ u_0=u, V=\frac{\partial u_t}{\partial t}|_{t=0} $  and	$B \in \Gamma\left(\Lambda^{m} T^{*} M\right),$	
			\begin{equation*}
				\begin{split}
					&\frac{d}{dt}\int_{M}F\left(\frac{|du|^{2}}{2}\right)+u^* B+H(u)\d v_g|_{t=0}\\
					=&-\jifen{\jiankuo{	\delta^\nabla\left( (F^{\prime}\left(\frac{|du|^{2}}{2}\right)du\right) )+ Z \left(du(e_1) \wedge \cdots \wedge du(e_m)\right)+\nabla H(u), v}}.	
				\end{split}
			\end{equation*}
		\end{thm}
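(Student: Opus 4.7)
The plan is to decompose the functional into its three natural summands
\[
E(u)=\int_M F\!\left(\tfrac{|du|^2}{2}\right)\d v_g +\int_M u^*B+\int_M H(u)\d v_g,
\]
compute $\frac{d}{dt}\big|_{t=0}$ of each piece separately, and then add the results and identify the formal $L^2$-gradient of $E$ through an integration-by-parts step.

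For the first term I would use the standard $F$-energy computation. Since $\tfrac{d}{dt}|_{t=0}\tfrac{|du_t|^2}{2}=\langle \nabla v, du\rangle$ (where $\nabla$ is the pull-back connection and $v=\partial_t u_t|_{t=0}$), chain rule gives
\[
\tfrac{d}{dt}\Big|_{t=0}\int_M F\!\left(\tfrac{|du_t|^2}{2}\right)\d v_g=\int_M F'\!\left(\tfrac{|du|^2}{2}\right)\langle \nabla v, du\rangle \d v_g,
\]
and then integration by parts (using the definition of the codifferential $\delta^\nabla$ on the vector-bundle-valued $1$-form $F'(\cdot)\,du$) produces $-\int_M \langle \delta^\nabla(F'(\cdot)du), v\rangle \d v_g$. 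The boundary term vanishes because $M$ is closed (or $v$ is compactly supported).

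For the middle term I would invoke the lemma cited from Koh directly, which evaluates $\tfrac{d}{dt}|_{t=0}\int_M u_t^*B$ as $\int_M \Omega(v,(du)^{\underline{m}}(\mathrm{vol}_g^\sharp))\d v_g$, and then use the defining relation $\langle \eta, Z_x(\xi_1\wedge\cdots\wedge\xi_m)\rangle_h=\Omega_x(\eta,\xi_1,\ldots,\xi_m)$ with $\eta=v$ and $\xi_i=du(e_i)$ to rewrite this as $\int_M\langle v, Z(du(e_1)\wedge\cdots\wedge du(e_m))\rangle \d v_g$. For the third term, the standard computation $\tfrac{d}{dt}|_{t=0}H(u_t)=\langle \nabla H(u),v\rangle$ yields $\int_M\langle \nabla H(u),v\rangle \d v_g$ at once.

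Summing the three pieces and factoring out $v$ under the integral gives the claimed formula. The only real point of care — and essentially the sole potential obstacle — is keeping track of the sign conventions: the minus sign arising from integration by parts in the $F$-energy piece must be reconciled with the sign conventions used in the definition of $Z$ (so that the $Z$-term ends up with the correct sign in the Euler–Lagrange equation stated in the definition of $F$-harmonic map with $m$-form and potential). Once the sign conventions are fixed consistently with the lemma recalled above, the three contributions assemble into exactly the stated first-variation formula.
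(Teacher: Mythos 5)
Your proposal is correct and follows exactly the route the paper intends: the paper states this theorem immediately after quoting the Koh-type lemma for $\frac{d}{dt}\big|_{t=0}\int_M u_t^*B$ precisely so that the proof reduces to the standard first variation of the $F$-energy (integration by parts against $\delta^\nabla$), that lemma combined with the defining relation between $Z$ and $\Omega$, and the chain rule for the potential term. Your remark about reconciling the sign of the $Z$-term with the Euler--Lagrange equation in the definition is a fair point of care, but it does not change the substance of the argument.
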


	Hereafter, in this section, we consider the case where $ B $ is two form and $ M $ is Riemannian surface.
		
		\begin{thm}[the second variation formula ]
				Let $ u_{s,t}:(M^m,g)\to (N^n,h) $ be  a smooth deformation of u such that such that $ u_0=u, V=\frac{\partial u_t}{\partial t}|_{t=0} ,W=\frac{\partial u_t}{\partial s}|_{t=0}$  and	$B \in \Gamma\left(\Lambda^{m} T^{*} M\right),$ then 
		\begin{equation}\label{eq2}
			\begin{split}
				\left.	\frac{\partial^{2}}{\partial s \partial t}\right|_{s, t=0}  E(u_{s,t})	=&\jifen{\FF\jiankuo{\tilde{\nabla}V,du}\jiankuo{{\tilde{\nabla}W,du}}}\\
				&+\int_{M} F^{\prime}(\frac{|d u|^{2}}{2}\left\{\langle\tilde{\nabla} V, \tilde{\nabla} W\rangle-\sum_{t=1}^{m} h( R^N\left(V, u_* e_{i}\right) u_* e_{i}, W)\right\} \d v_{g}\\
				&+ \jifen{\nabla^2 H(V,W)}+\int_{M} \jiankuo{V,Z(\covt{e_1}{W}\wedge du(e_2)\wedge \cdots \wedge du(e_m)))} \d v_{g}\\
				&+\int_{M} \jiankuo{V,Z\big(du(e_2))\wedge\covt{e_2}{W}\wedge \cdots \wedge du(e_m)\big) } \d v_{g}\\
				&+\cdots+ \int_{M} \jiankuo{V,Z\big(du(e_1))\wedge \cdots \wedge \covt{e_m}{W}\big) } \d v_{g}
			\end{split}
		\end{equation}
		\end{thm}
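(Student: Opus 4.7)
The plan is to exploit linearity by splitting $E=E_1+E_2+E_3$, where $E_1=\int_M F(|du|^2/2)\,\mathrm{d}v_g$, $E_2=\int_M u^*B$, and $E_3=\int_M H(u)\,\mathrm{d}v_g$, and to compute the mixed partial $\partial^2/(\partial s\,\partial t)|_{s=t=0}$ of each summand separately. It will be convenient to work with a two-parameter variation of exponential type, $u_{s,t}(x)=\exp_{u(x)}(sW(x)+tV(x))$, so that mixed terms such as $\tilde{\nabla}_{\partial_s}V|_{s=t=0}$ and $\tilde{\nabla}_{\partial_t}W|_{s=t=0}$ vanish. This choice kills second-order cross terms that would otherwise be paired against the Euler--Lagrange operator of $E$, and the resulting bilinear form depends only on $V$ and $W$.

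\textbf{The $F$-energy piece.} I would start from the first variation
\[
\frac{d}{dt}E_1=\int_M F'\!\left(\frac{|du|^2}{2}\right)\langle\tilde{\nabla}V,\,du\rangle\,\mathrm{d}v_g,
\]
with $\langle\tilde{\nabla}V,du\rangle=\sum_i\langle\tilde{\nabla}_{e_i}V,du(e_i)\rangle$, and differentiate in $s$ at $s=t=0$. The chain rule applied to $F'(|du|^2/2)$ will yield the term $\int_M F''\langle\tilde{\nabla}V,du\rangle\langle\tilde{\nabla}W,du\rangle\,\mathrm{d}v_g$ appearing in the stated formula. The derivative hitting $du(e_i)$ uses $\tilde{\nabla}_{\partial_s}du(e_i)=\tilde{\nabla}_{e_i}W$, producing $F'\langle\tilde{\nabla}V,\tilde{\nabla}W\rangle$. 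The derivative hitting $\tilde{\nabla}_{e_i}V$ requires the commutation identity
\[
\tilde{\nabla}_{\partial_s}\tilde{\nabla}_{e_i}V-\tilde{\nabla}_{e_i}\tilde{\nabla}_{\partial_s}V=R^N(u_*\partial_s,\,u_*e_i)V;
\]
combined with $\tilde{\nabla}_{\partial_s}V|_{s=0}=0$ and the pair-wise symmetry of $R^N$, this will produce the curvature term $-F'\sum_i h(R^N(V,u_*e_i)u_*e_i,W)$.

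\textbf{The two-form and potential pieces.} For $E_2$, I would begin with the first-variation expression $\frac{d}{dt}E_2=\int_M\langle V,\,Z(du(e_1)\wedge\cdots\wedge du(e_m))\rangle\,\mathrm{d}v_g$ (the two-form piece of the first variation formula already established), and differentiate it in $s$ at $s=t=0$. Since $\tilde{\nabla}_{\partial_s}V|_{s=0}=0$, the derivative only strikes the wedge factors, and the product rule together with $\tilde{\nabla}_{\partial_s}du(e_i)=\tilde{\nabla}_{e_i}W$ will yield exactly the $m$ listed summands. For $E_3$, one has $\partial_t H(u_{s,t})=\langle\nabla H,\partial_t u\rangle$, and differentiating in $s$ will leave only $\nabla^2 H(W,V)$, again using $\tilde{\nabla}_{\partial_s}V|_{s=0}=0$.

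\textbf{Expected obstacle.} The main technical point is the commutator argument in the $E_1$ step, where one must carefully swap $\tilde{\nabla}_{\partial_s}$ past $\tilde{\nabla}_{e_i}$ to extract the curvature term in the correct slot and with the correct sign. A more delicate issue is that differentiating $Z_{u(x)}$ in $s$ in the $E_2$ computation would naively produce an extra $\langle V,(\nabla_W Z)(du(e_1)\wedge\cdots\wedge du(e_m))\rangle$ term not visible in the statement; one must verify that this term is absorbed (for instance, via $dB=\Omega$ and an integration by parts on $M$), so that only the $m$ wedge-derivative terms survive in the final formula.
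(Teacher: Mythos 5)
Your proposal follows essentially the same route as the paper: split $E$ into the $F$-energy, two-form, and potential pieces, quote the known second variation of the $F$-energy, and differentiate the first-variation integrands of $\int_M u^*B$ and $\int_M H(u)$ directly by the Leibniz rule, using $\tilde{\nabla}_{\partial_s}du(e_i)=\tilde{\nabla}_{e_i}W$. You are in fact more careful than the paper on one point: the paper's intermediate computation retains the cross terms $\langle\tilde{\nabla}_{\partial_s}\partial_t\Phi,\nabla H\rangle$ and $\langle\tilde{\nabla}_{\partial_s}\partial_t\Phi,Z(du(e_1)\wedge du(e_2))\rangle$ and then silently drops them in the final formula; your choice of the exponential variation $u_{s,t}=\exp_u(sW+tV)$, which forces $\tilde{\nabla}_{\partial_s}\partial_t u|_{s=t=0}=0$, is a legitimate way to justify this (the alternative is to assume $u$ is a critical point so that these terms assemble into the Euler--Lagrange operator paired with $\tilde{\nabla}_W V$ and vanish; the theorem as stated for arbitrary deformations needs one of the two). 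The obstacle you flag at the end is genuine and you should not wave it away: differentiating $Z_{u_{s,t}(x)}$ in $s$ does produce a term $\langle V,(\nabla_W Z)(du(e_1)\wedge\cdots\wedge du(e_m))\rangle$, equivalently $(\nabla_W\Omega)(V,du(e_1),\ldots,du(e_m))$, which appears in neither the stated formula nor the paper's proof, and it is not absorbed by $dB=\Omega$ together with an integration by parts in any obvious way (the identity $d\Omega=0$ controls only the antisymmetrized part of $\nabla\Omega$). Unless $\Omega$ is parallel, this term must either be carried along or explicitly shown to cancel; as written, this is a gap in the paper's own argument that your proposal correctly detects but does not close.
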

		\begin{proof}
				Let $ u_{s,t}:(M^m,g)\to (N^n,h) $ be  a smooth deformation of u such that such that $ u_0=u, V=\frac{\partial u_t}{\partial t}|_{t=0} $  and	$B \in \Gamma\left(\Lambda^{m} T^{*} M\right),$
			Let $ \Phi: (-\delta, \delta)\times (-\delta, \delta)\times M \to N $, defined by $ \Phi(s,t,x)=u_{s,t}(x). $
			It is not hard to see that 
				\begin{align}
			\frac{\partial^{2}}{\partial s \partial t}\int_{M}H(u)\d v_g= \int_{M}  \nabla^2 H(\piant,\pians) +\left\langle  \nabla_{\pians}\piant, \nabla H \right\rangle  \d v_{g},
			\end{align}	
and			
		\begin{equation*}
			\begin{split}
				\frac{\partial^{2}}{\partial s \partial t} \int_{M}u^* B\d v_g=& \int_{M} \jiankuo{\piant,Z(\covt{e_1}{\pians}\wedge d\Phi(e_2))}\d v_{g}\\
				&+\int_{M} \jiankuo{\piant,Z(d\Phi(e_2))\wedge\covt{e_2}{\pians} } \d v_{g}\\
				&+\int_{M} \jiankuo{\covt{\pians}{\piant},Z(du(e_1),du(e_2))}  \d v_g \\ 
			\end{split}
		\end{equation*}
			
	Using well known the second variation formula for $ F $ harmonic map, we get 		
		\begin{equation}\label{index1}
			\begin{split}
						\left.	\frac{\partial^{2}}{\partial s \partial t}\right|_{s, t=0}  E	=&\jifen{\FF\jiankuo{\tilde{\nabla}V,du}\jiankuo{{\tilde{\nabla}W,du}}}\\
				&+\int_{M} F^{\prime}(\frac{|d u|^{2}}{2}\left\{\langle\tilde{\nabla} V, \tilde{\nabla} W\rangle-\sum_{t=1}^{m} h( R^N\left(V, u_* e_{i}\right) u_* e_{i}, W)\right\} \d v_{g}\\
				&+ \jifen{\nabla^2 H(V,W)}+\int_{M} \jiankuo{V,Z(\covt{e_1}{W}\wedge du(e_2)\wedge \cdots \wedge du(e_m)))} \d v_{g}\\
				&+\int_{M} \jiankuo{V,Z\big(du(e_2))\wedge\covt{e_2}{W}\wedge \cdots \wedge du(e_m)\big) } \d v_{g}\\
				&+\cdots+ \int_{M} \jiankuo{V,Z\big(du(e_1))\wedge \cdots \wedge \covt{e_m}{W}\big) } \d v_{g}
			\end{split}
		\end{equation}
		\end{proof}
		\begin{defn} 
			For $F$-harmonic map with $ m $-form and potential, we define the index form $ I_{FBH}(V,W) $ be the RHS of \eqref{index1}.
				A map $ u $ is called $ FBH$-stable if $ I_{FBH}(V,V) $ is nonnegative for any nonzero vector filed $V$.  Otherwise, it is called  $ FBH$-unstable.
		\end{defn}
		\begin{thm}\label{thm2.3}
		Let $ (M,g)  $ be compact Riemannian surface, $ N $ be a  compact simply connected $ \delta $-pinched n-dimensional  Riemannian manifold. Assume that $ c_F<\frac{n}{2}-1 $ and 
			$$  \Phi_{n,F}(\delta)+2(n+1) (3\|\Omega\|_\infty+2\|\Omega\|_\infty\frac{n}{4}k_3^2(\delta)) <0, $$
			where $\Phi_{n,F}(\delta)= \left(2 c_{F}+1\right)\left\{\frac{n}{4} k_{3}^{2}(\delta)+k_{3}(\delta)+1\right\}-\frac{2 \delta}{1+\delta}(n-1).$ 
			Every $ FBH$-stable  $ F $-harmonic map with two form and potential  $ u : (M,g) \to (N,h)$  is constant.
		\end{thm}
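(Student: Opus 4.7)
The approach I would take follows the Howard--Okayasu averaging scheme adapted to the enriched functional, exactly as hinted at by the structure of the hypothesis (note the appearance of $k_3(\delta)$ and the pinching quantity $\Phi_{n,F}(\delta)$). The plan is as follows.

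First, embed $N$ isometrically into some $\mathbb{R}^q$ (Nash). For every $v\in\mathbb{R}^q$ let $V=v^{T}$ be the tangential projection along $u$, so that $V\in\Gamma(u^{-1}TN)$ and $\tilde{\nabla}V=-(Bv)^{T}$ up to the normal component, where $B$ denotes the second fundamental form of $N\subset\mathbb{R}^q$. I would then choose an orthonormal basis $\{v_\alpha\}_{\alpha=1}^{q}$ of $\mathbb{R}^q$ and set $V_\alpha=v_\alpha^{T}$. The key algebraic identity is $\sum_\alpha \langle V_\alpha,X\rangle\langle V_\alpha,Y\rangle=\langle X,Y\rangle$ for $X,Y\in T_{u(x)}N$, which converts the pointwise second variation into integrals of scalar curvature-type quantities on $N$.

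Next I would plug $V=W=V_\alpha$ into the second variation formula \eqref{index1} and sum over $\alpha$. The curvature term $-F'\sum_i\langle R^N(V_\alpha,u_*e_i)u_*e_i,V_\alpha\rangle$ summed over $\alpha$ produces (via Gauss and the identity above) a term bounded by the sectional curvature of $N$; under the $\delta$-pinching this is controlled from below by $\frac{2\delta}{1+\delta}(n-1)\,|du|^2$ after the standard Howard estimate. The term $F'|\tilde{\nabla}V_\alpha|^2$ summed over $\alpha$ is bounded by $\frac{n}{4}k_3^2(\delta)|du|^2$, and the $F''$ term, after using $F''(t)\,t\le c_F F'(t)$, contributes $(2c_F+1)\{\tfrac{n}{4}k_3^2(\delta)+k_3(\delta)+1\}|du|^2\cdot F'$. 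These are exactly the ingredients that assemble into $\Phi_{n,F}(\delta)\cdot F'(|du|^2/2)|du|^2$.

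The new ingredients are the Hessian of $H$ and the two-form terms. Summing $\sum_\alpha\nabla^2 H(V_\alpha,V_\alpha)=\mathrm{tr}_N\nabla^2 H$ gives a bounded pointwise contribution; I would absorb this into the potential hypothesis (in the present statement it appears to be implicitly dominated, or one assumes the Hessian is nonpositive enough — this is where the statement should be checked, but the calculation is straightforward). For the $m$-form contributions, since $m=2$ one has exactly two terms of the form $\int\langle V_\alpha,Z(\tilde{\nabla}_{e_1}V_\alpha\wedge du(e_2))\rangle+\int\langle V_\alpha,Z(du(e_1)\wedge\tilde{\nabla}_{e_2}V_\alpha)\rangle$. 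Each factor $|Z(\cdot)|$ is bounded by $\|\Omega\|_\infty$ times a product of norms, and after summation over $\alpha$ using $\sum_\alpha|V_\alpha|^2\le n+1$ and $\sum_\alpha|\tilde{\nabla}V_\alpha|^2\le \tfrac{n}{4}k_3^2(\delta)|du|^2$, they are estimated by $2(n+1)\big(3\|\Omega\|_\infty+2\|\Omega\|_\infty\tfrac{n}{4}k_3^2(\delta)\big)\cdot F'(|du|^2/2)|du|^2$, which is exactly the second summand in the hypothesis.

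Combining these pointwise estimates, one obtains
\[
\sum_{\alpha=1}^{q} I_{FBH}(V_\alpha,V_\alpha)\ \le\ \Big(\Phi_{n,F}(\delta)+2(n+1)\big(3\|\Omega\|_\infty+2\|\Omega\|_\infty\tfrac{n}{4}k_3^2(\delta)\big)\Big)\int_M F'\!\left(\tfrac{|du|^2}{2}\right)|du|^2\,\d v_g,
\]
plus controlled potential terms. Under the hypothesis, the coefficient is strictly negative, so if $du\not\equiv 0$ then some $V_\alpha$ satisfies $I_{FBH}(V_\alpha,V_\alpha)<0$, contradicting $FBH$-stability. Hence $du\equiv 0$ and $u$ is constant. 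The main obstacle I anticipate is the careful bookkeeping of the $Z$-terms after averaging: one must use the anti-symmetry of $\Omega$ to drop the cross terms $\langle\tilde{\nabla}_{\pians}\piant,Z(du(e_1),du(e_2))\rangle$ that otherwise appear with the wrong sign, and then bound what remains by $\|\Omega\|_\infty$ uniformly. This is where the constant $2(n+1)$ and the $k_3(\delta)$ factor in the two-form contribution ultimately come from, and matching those constants precisely is the most delicate step.
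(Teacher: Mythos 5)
Your overall skeleton --- average the index form over a finite-dimensional family of test sections, show the trace is negative under the stated inequality, and contradict $FBH$-stability --- is the same as the paper's. But the family you choose is wrong, and this is a genuine gap rather than a cosmetic difference. You propose a Nash embedding $N\hookrightarrow\mathbb{R}^q$ and take $V_\alpha=v_\alpha^{T}$ for an orthonormal basis of $\mathbb{R}^q$, so that $\tilde{\nabla}V_\alpha$ is governed by the second fundamental form of that embedding. The second fundamental form of an arbitrary isometric embedding is in no way controlled by the pinching constant $\delta$, so your asserted bound $\sum_\alpha|\tilde{\nabla}V_\alpha|^2\le\tfrac{n}{4}k_3^2(\delta)|du|^2$ has no justification, and neither does the count $n+1$ (your $q$ from Nash is not $n+1$). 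The paper instead follows Ara and Liu: it forms the rank-$(n+1)$ bundle $E=TN\oplus\epsilon(N)$ with the connection $\nabla''_XY={}^N\nabla_XY-h(X,Y)e$, $\nabla''_Xe=X$, invokes the Grove--Karcher--Ruh-type result that on a simply connected $\delta$-pinched manifold there is a genuinely flat connection $\nabla'$ on $E$ with $\|\nabla'-\nabla''\|\le\tfrac12 k_3(\delta)$, and takes as test fields the $TN$-components $W_i^T$ of an orthonormal basis of the $(n+1)$-dimensional space of $\nabla'$-parallel sections. It is precisely this comparison of connections that produces $k_3(\delta)$, the factor $\tfrac{n}{4}k_3^2(\delta)$ in the estimate of $\sum_i|(\nabla''_{u_*e_2}W_i)^T|^2$, and the coefficient $n+1$ in the hypothesis; none of these constants can be recovered from a Nash embedding. (Simple connectedness of $N$, which you never use, is exactly what the flat-connection construction requires.)

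Two smaller points. First, your treatment of the curvature term via ``Gauss and the identity above'' is also tied to the embedding picture and would not give the clean lower bound $\tfrac{2\delta}{1+\delta}\{|W^T|^2|u_*e_i|^2-\langle W^T,u_*e_i\rangle^2\}$ that the paper gets directly from $\delta$-pinching. Second, you are right to flag the Hessian term $\sum_\alpha\nabla^2H(V_\alpha,V_\alpha)$ as unaccounted for in the hypothesis; the paper carries it inside $I_1$ but silently drops it from the final estimate, so your suspicion that the statement needs an additional assumption on $\nabla^2H$ is well founded. Nevertheless, as written your argument does not prove the theorem: you must replace the Nash embedding by the almost-flat connection on $TN\oplus\epsilon(N)$ and its parallel sections to obtain the constants appearing in the hypothesis.
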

	\begin{proof}
		
			As in \cite{MR2259738}, we assume the sectional curvature of N is equal to $ \frac{2\delta}{1+\delta} .$ Let  the vector  bundle $ E=TN\oplus \epsilon(N) ,$ here, $ \epsilon(N) $ is  the trivial bundle. As in\cite{ara2001stability} \cite{MR2259738}, we can define a   metric connection  $ \nabla^{\prime\prime} $ on $ E $ as follows:
		\begin{equation}\label{ccfd}
			\begin{split}
				&\nabla^{\prime\prime}_XY={}^{N}\nabla_XY-h(X,Y)e;\\
				&\nabla^{\prime\prime}_Xe=X,	
			\end{split}
		\end{equation}
		We cite the functions in \cite{ara2001stability} or \cite{MR2259738}, 
		\begin{equation*}
			\begin{split}
				k_{1}(\delta)=\frac{4(1-\delta)}{3 \delta}\left[1+\left(\sqrt{\delta} \sin \frac{1}{2} \pi \sqrt{\delta}\right)^{-1}\right],
				k_{2}(\delta)=\left[\frac{1}{2}(1+\delta)\right]^{-1}  k_{1}(\delta). \\
			\end{split}
		\end{equation*}	
		By \cite{ara2001stability} \cite{MR2259738}, we know that  there exists a flat connection $ \nabla^{\prime} $ such that 
		\begin{equation*}
			\begin{split}
				\|\nabla^{\prime}-\nabla^{\prime\prime}\|\leq \frac{1}{2}k_3(\delta),
			\end{split}
		\end{equation*}
		where $ k_3(\delta) $ is defined as 
		\begin{equation*}
			\begin{split}
				k_{3}(\delta)=k_{2}(\delta) \sqrt{1+\left(1-\frac{1}{24} \pi^{2}\left(k_{1}(\delta)\right)^{2}\right)^{-2}} .
			\end{split}
		\end{equation*}
		
		take a cross section $ W  $ of $ E $ and let $ W^T $ denotes the $ TN $ component of $ W $
		\begin{equation}\label{cjk}
			\begin{split}
				&	I\left(W^{T}\right.\left., W^{T}\right) \\
				=& \int_{M}   F{''}\left(\frac{|\mathrm{d} u|^{2}}{2}\right) \sum_{i=1}^{m}\left\langle\tilde{\nabla}_{e_{i}} W^{T}, u_{*} e_{i}\right\rangle^{2} \mathrm{d}v_{g} \\
				&	+\int_{M}  F{'}\left(\frac{|\mathrm{d} u|^{2}}{2}\right) \cdot \sum_{i=1}^{m}\left\{\left|\tilde{\nabla}_{e_{i}} W^{T}\right|^{2}-h\left(R^{N}\left(W^{T}, u_{*} e_{i}\right) u_{*} e_{i}, W^{T}\right)\right\} \mathrm{d}v_{g}	\\
				&+ \jifen{\nabla^2 H(W^{T},W^T)}+\int_{M} \jiankuo{W^{T},Z(\covt{e_1}{W^{T}}\wedge du(e_2))} \mathrm{d}v_{g}\\
				&+\int_{M} \jiankuo{W^{T},Z(du(e_1))\wedge\covt{e_2}{W} } \mathrm{d}v_{g}\\ 
				\leq &\int_{M} F{'}\left(\frac{|\mathrm{d} u|^{2}}{2}\right) \sum_{i=1}^{m} \left\{\left(2 c_{F}+1\right)\left|\tilde{\nabla}_{e_{i}} W^{T}\right|^{2}\right.
				\left.-h\left(R^{N}\left(W^{T}, u_{*} e_{i}\right) u_{*} e_{i}, W^{T}\right)\right\} \mathrm{d}v_{g}+I_1,
			\end{split}
		\end{equation}
			where 
	\begin{equation*}
		\begin{split}
			I_1(W^T,W^T)=&	\sum_{i} \jifen{\nabla^2 H(W^{T},W^T)}+\int_{M} \jiankuo{W^{T},Z(\covt{e_1}{W^{T}}\wedge du(e_2))} v_{g}\\
			&+\int_{M} \jiankuo{W^{T},Z(du(e_2))\wedge\covt{e_2}{W^T} } v_{g}.
		\end{split}
	\end{equation*}	
%
		Then, we have by \cite[(5.3)]{ara2001stability} 
		$$
		\begin{aligned}
			\sum_{i=1}^m\left|\tilde{\nabla}_{e_i} W^T\right|^2=& \sum_{i=1}^m\left|\left(\nabla_{u_{*} e_i}^{\prime \prime} W\right)^T\right|^2+\langle W, e\rangle^2|\mathrm{~d} u|^2 \\
			&-2 \sum_{i=1}^m\langle W, e\rangle\left\langle\nabla_{u_* e_i}^{\prime \prime} W^T, u_* e_i\right\rangle .
		\end{aligned}
		$$
		Since $N$ is $\delta$-pinched, by \cite[(5.4)]{ara2001stability}

		$$ h\left(R^N\left(W^T, u_* e_i\right) u_* e_i, W^T\right) \geq \frac{2 \delta}{1+\delta}\left\{\left|W^T\right|^2\left|u_* e_i\right|^2-\left\langle W^T, u_* e_i\right\rangle^2\right\}. $$
		Substituting () and () into (\ref{cjk}), we obtain
		$$
		I\left(W^T, W^T\right) \leq \int_M F^{\prime}\left(\frac{|\mathrm{d} u|^2}{2}\right) \cdot q(W) \d v_g+I_1.
		$$
		where
		\begin{equation*}
			\begin{split}
				q(W)=&\left(2c_{F}+1\right)\left\{\sum_{i=1}^{m}\left|\left(\nabla_{u_{*} e_{i}}^{\prime \prime} W\right)^{T}\right|^{2}+\langle W, e\rangle^{2}|\mathrm{~d} u|^{2}\right.\left.\quad-2 \sum_{i=1}^{m}\langle W, e\rangle\left\langle\nabla_{u_{*} e_{i}}^{\prime \prime} W^{T}, u_{*} e_{i}\right\rangle\right\} \\
				&-\frac{2 \delta}{1+\delta} \sum_{i=1}^{m}\left\{\left|W^{T}\right|^{2}\left|u_{*} e_{i}\right|^{2}-\left\langle W^{T}, u_{*} e_{i}\right\rangle^{2}\right\} .
			\end{split}
		\end{equation*}
	
	Let $\mathcal{W}:=\left\{W \in \Gamma(E) ; \nabla^{\prime} W=0\right\}$, then $\mathcal{W}$ with natural inner product is isomorphic to $\mathbf{R}^{n+1}$. Define a quadratic form $Q$ on $\mathcal{W}$ by 	
	$$ Q(W):=\int_{M} F^{\prime}\left(\frac{|\mathrm{d} u|^{2}}{2}\right) \cdot q(W)  \d v_g +I_1(W,W).$$ 
	
	Taking an orthonormal basis $\left\{W_{1}, W_{2}, \ldots, W_{n}, W_{n+1}\right\}$ of $\mathcal{W}$ with respect to its natural inner product such that $ W_1,W_2, \cdots, W_n $ is tangent to $N$, we obtain

	By Liu\cite{MR2259738},	
		\begin{equation*}
			\begin{split}
				\operatorname{Tr}_g I \leq \int_{M} F^{\prime}\left(\frac{|\mathrm{d} u|^2}{2}\right)\Phi_{n,F}(\delta)|du |^2 \d v_g+ \operatorname{Tr}_g (I_1(W_i^T,W_i^T)).
			\end{split}
		\end{equation*}
	By the relation between $ B $  and $ \Omega $,  we have 
		\begin{equation*}
			\begin{split}
				\jiankuo{W_i^{T},Z(du(e_2))\wedge\covt{e_2}{W_i} } &=\Omega (W_i^T,du(e_1),\covt{e_2}{W_i^T})\\
				&\leq \|\Omega\|_\infty |W_i^T|du(e_1)||\left(\nabla_{u_* e_2}^{\prime \prime} W\right)^T-\langle W_i^T, e\rangle u_* e_2|\\
				&\leq   \|\Omega\|_\infty \bigg(|du(e_1)|^2+2|\left(\nabla_{u_* e_2}^{\prime \prime} W_i\right)^T |^2+2|du(e_2) |^2\bigg).
			\end{split}
		\end{equation*}
	However,	
		\begin{equation*}
			\begin{split}
				\sum_{i=1}^{n+1}	|\left(\nabla_{u_* e_2}^{\prime \prime} W_i\right)^T |^2=&	\sum_{i=1}^{n+1}\sum_{j=1}^{n}	| \left \langle \left(\nabla_{u_* e_2}^{\prime \prime} W_i\right), W_j \right \rangle  |^2\\
				=&\sum_{i=1}^{n+1}\sum_{j=1}^{n}	| \left \langle \left(\nabla_{u_* e_2}^{\prime \prime} W_j\right), W_i \right \rangle  |^2\\
				=&\sum_{j=1}^{n}	|  \nabla_{u_* e_2}^{\prime \prime} W_j |^2\leq \frac{n}{4}k_3^2(\delta)|du(e_2) |^2.
			\end{split}
		\end{equation*}
		Hence, we get 
		\begin{equation*}
			\begin{split}
				&\int_{M} \jiankuo{W_i^{T},Z(\covt{e_1}{W_i^{T}}\wedge du(e_2))}d v_{g}
				+\int_{M} \jiankuo{W_i^{T},Z(du(e_1))\wedge\covt{e_2}{W_i^T} } v_{g}\\
				\leq&  \int_M \|\Omega\|_\infty \bigg(3|du(e_1)|^2+3|du(e_2) |^2\bigg)+2\|\Omega\|_\infty\frac{n}{4}k_3^2(\delta)\left( |du(e_2) |^2 +|du(e_1) |^2\right) \d v_g\\
				& \leq \int_{M} (3\|\Omega\|_\infty+2\|\Omega\|_\infty\frac{n}{4}k_3^2(\delta)) |du |^2 \d v_g.
			\end{split}
		\end{equation*}
So, we get

	\begin{equation*}
	\begin{split}
		\operatorname{Tr}_g I \leq \int_{M} F^{\prime}\left(\frac{|\mathrm{d} u|^2}{2}\right)\Phi_{n,F}(\delta)|du |^2 \d v_g+ 2(n+1)\int_{M} (3\|\Omega\|_\infty+2\|\Omega\|_\infty\frac{n}{4}k_3^2(\delta)) |du |^2 \d v_g.
	\end{split}
\end{equation*}	

\end{proof}

	Next, we give Howard type theorem 
		
		\begin{thm}
			Let $F:[0, \infty) \rightarrow[0, \infty)$ be a $C^{2}$ strictly increasing function.  Let $ M $ be a  compact Riemannian manifold . Let $N$ be a compact simply-connected $\delta$-pinched $n$-dimensional Riemannian manifold. Assume that $n$ and $\delta$ satisfy $c_F<\frac{n-2-2m}{4}$ and
		\begin{equation*}
			\begin{split}
				\Psi_{n, F}(\delta):=&\int_{0}^{\pi}\bigg\{\left(2 c_{F}+1+2\|\Omega\|_\infty \sqrt{g_2(\rho,\delta)}\right) g_{2}(t, \delta)\left(\frac{\sin \sqrt{\delta} t}{\sqrt{\delta}}\right)^{n-1}\\
				&-(n-1) \delta \cos ^{2}(t) \sin ^{n-1}(t)\bigg\} \d t<0,
			\end{split}
		\end{equation*}
			where $g_{2}(t, \delta)=\max \left\{\cos ^{2}(t), \delta \sin ^{2}(t) \cot ^{2}(\sqrt{\delta} t)\right\}$. Then  every stable $ F $-harmonic map with two form and potential   $u: (M,g) \rightarrow (N,h)$ must be  constant.
		\end{thm}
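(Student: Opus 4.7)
I plan to follow the Howard--Okayasu strategy as adapted to $F$-harmonic maps in \cite{MR2259738}, with additional terms to accommodate the two-form $B$ (equivalently $\Omega = dB$) and the potential $H$. The key construction is that of a family of test variation fields obtained from Jacobi fields along geodesics emanating from a variable base point $q \in N$: for each $q \in N$ with $d(u(x),q) < \pi/\sqrt{\delta}$ (which by Myers/Klingenberg covers $N$ after a measure-zero adjustment, using $\delta$-pinching and simple connectivity) and each unit $a \in T_qN$, set $V^{q,a}(y) = J(\rho)$ where $\rho = d(y,q)$ and $J$ is the normal Jacobi field along the unique minimizing geodesic from $q$ to $y$ with $J(0) = a$. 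The variation of $u$ is then $V(x) = V^{q,a}(u(x))$, and I average the second variation $I_{FBH}(V,V)$ over $(q,a)$ with respect to the product of the Riemannian volume on $N$ and the standard measure on the unit sphere bundle.

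First, I would apply the second variation formula established just above, using $F''(\tau)\tau \le c_F F'(\tau)$ to absorb the $F''$-term into the $F'$-term at the cost of the factor $(2c_F+1)$ in front of $|\tilde\nabla V|^2$, exactly as in the computation leading to \eqref{cjk}. Rauch comparison on the $\delta$-pinched $N$ produces the two essential pointwise inequalities: along the geodesic from $q$, one has $|J(\rho)|^2 \le \cos^2 \rho + (\sin\sqrt{\delta}\rho/\sqrt{\delta})^2$-type bounds and $|\nabla J(\rho)|^2 \le g_2(\rho,\delta)$, while the pinching gives the curvature lower bound $h(R^N(V,u_*e_i)u_*e_i, V) \ge \delta\{|V|^2|u_*e_i|^2 - \langle V, u_*e_i\rangle^2\}$. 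After the average over $(q,a)$ one integrates in polar coordinates $(\rho, \text{direction})$; the radial integrals collapse the dependence into the single variable $t$ and reproduce the integrands appearing in $\Psi_{n,F}(\delta)$: the factor $g_2(t,\delta)(\sin\sqrt{\delta}t/\sqrt{\delta})^{n-1}$ from $|\tilde\nabla V|^2$ times volume, and the factor $(n-1)\delta\cos^2 t\,\sin^{n-1}t$ from the curvature term (the $(n-1)$ coming from tracing over directions orthogonal to the radial one).

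The new ingredient is the estimation of the $\Omega$-terms and the $\nabla^2 H$-term. For each of the $m$ copies of $\int \langle V, Z(\cdots \wedge \tilde\nabla_{e_i}V \wedge \cdots)\rangle$ I would use $|Z| = |\Omega|$ together with Cauchy--Schwarz, bounding a single factor pointwise by $\|\Omega\|_\infty |V|\,|\tilde\nabla V|\,|du|^{m-1}$, then applying AM--GM to split this as a combination of $|\tilde\nabla V|^2$ and $|du|^2$ contributions. After the $(q,a)$-average this contributes precisely the extra $2\|\Omega\|_\infty\sqrt{g_2(\rho,\delta)}$ weight multiplying the $g_2(t,\delta)(\sin\sqrt{\delta}t/\sqrt{\delta})^{n-1}$ integrand in $\Psi_{n,F}(\delta)$; the role of the hypothesis $c_F < (n-2-2m)/4$ is to ensure that all these $m$ contributions, combined with the $F''$-factor $(2c_F+1)$, remain absorbed inside the stated coefficient rather than producing additional positive terms that would spoil the sign condition. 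The $\nabla^2 H(V,V)$ term, after averaging in $(q,a)$, either vanishes by the symmetry $V^{q,a} \mapsto -V^{q,a}$ in the unit sphere bundle (when treated via the linearized gradient flow, as in \cite{ara2001stability}) or is handled under the standing convexity hypothesis on $H$; in either case it does not spoil the inequality.

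The conclusion then follows as usual: averaging $I_{FBH}(V,V)$ over the entire family gives
\[
\int_{\text{(q,a)}} I_{FBH}(V^{q,a}, V^{q,a}) \le \Psi_{n,F}(\delta)\int_M F'\!\left(\tfrac{|du|^2}{2}\right)|du|^2\,dv_g,
\]
up to a positive constant. Since $u$ is $FBH$-stable every $I_{FBH}(V,V)\ge 0$, and since $F'>0$ with $\Psi_{n,F}(\delta)<0$, one concludes $|du|\equiv 0$, i.e. $u$ is constant. The main technical obstacle I anticipate is the careful bookkeeping of the coefficient of $\sqrt{g_2(\rho,\delta)}$ in the $\Omega$-terms: the square root appears because one factor of $|\tilde\nabla V|$ (contributing $\sqrt{g_2}$) is paired with $|V|$ rather than a second $|\tilde\nabla V|$, and propagating this uniformly through the $(q,a)$-average under the constraint $c_F<(n-2-2m)/4$ is the delicate point.
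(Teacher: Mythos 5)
Your overall strategy is the same as the paper's: apply the second variation formula, absorb the $F''$ term into a $(2c_F+1)$ coefficient, bound the new $\Omega$-terms pointwise, and run the Howard averaging argument against the pinching curvature bound. The one structural difference is your choice of test family: you average Jacobi fields $V^{q,a}$ over the unit sphere bundle (Howard's original construction), whereas the paper follows Ara and uses $V^y=\nabla f_k\circ\rho_y$ averaged only over $y\in N$, with the single estimate $|\tilde\nabla_{e_i}V^y_k|^2\le g_2(\rho,\delta)|u_*e_i|^2$ doing all the work. Both families yield the same integrand $g_2(t,\delta)(\sin\sqrt{\delta}t/\sqrt{\delta})^{n-1}$ versus $(n-1)\delta\cos^2 t\,\sin^{n-1}t$, so this is a cosmetic rather than substantive divergence. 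Your closing remark correctly identifies why the coefficient is $2\|\Omega\|_\infty\sqrt{g_2}$ rather than $g_2$: one factor $|\tilde\nabla V|\le\sqrt{g_2}\,|du(e_i)|$ is paired with the bounded factor $|V|$, and then $|du(e_1)||du(e_2)|\le|du|^2$; the earlier description in terms of AM--GM splitting into $|\tilde\nabla V|^2$ and $|du|^2$ pieces would instead produce $g_2+1$ and should be discarded in favor of the direct bound, which is what the paper does.

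There is one concrete error: your claim that the averaged $\nabla^2H(V^{q,a},V^{q,a})$ term vanishes ``by the symmetry $V^{q,a}\mapsto -V^{q,a}$'' cannot be right, because the Hessian is quadratic in $V$ and hence even under $a\mapsto-a$; averaging over directions produces a trace-type quantity (roughly $|J|^2\,\Delta H$ along the geodesic), not zero. Your fallback --- a convexity/sign hypothesis on $\nabla^2H$ --- is not among the stated hypotheses of the theorem. To be fair, the paper's own proof is no better on this point: it includes $\nabla^2H(V_k^y,V_k^y)$ in the definition of $I_1$ and then silently omits it when bounding $I_1$ by $\int_M 2\|\Omega\|_\infty\sqrt{g_2(\rho,\delta)}\,|du|^2\,\mathrm{d}v_g$. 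So you and the paper land in the same place, but the specific justification you offer is false and should be replaced either by an explicit hypothesis such as $\nabla^2H\le 0$ or by an honest additional term in $\Psi_{n,F}(\delta)$. Relatedly, neither you nor the paper actually uses the hypothesis $c_F<\frac{n-2-2m}{4}$ anywhere visible in the argument; your suggestion that it controls the bookkeeping of the $m$ wedge-slot contributions is plausible but not demonstrated, and in the paper's setting $m=2$ (the domain is a surface) so only two such terms occur.
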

		\begin{rem}
			The conditon is different from that in Ara\cite{Mitsunori1999Geometry} and Liu \cite{MR2259738}.
		\end{rem}
	\begin{proof}
		We modify the proof in \cite{ara2001stability}. For $ y\in N, $ let $ V^{y}=\nabla f\circ \rho_y,$  we can approximate $ f $ by $ f_k $, one can refer to \cite{ara2001stability} for the construction of $ f_k $.  Let   $ V_k^{y}=\nabla f_k\circ \rho_y  $. Then $ V_k^y $ converges uniformly to $ V^y $ as $ k\to \infty. $ By the second variational formula for $F$ harmonic map with two form and potential, 
		\begin{equation*}
			\begin{split}
				&\lim\limits_{k\to \infty}	\int_N  I(V_{k}^{y},V_{k}^{y})dv_N(y)\\
				\leq & \int_N	\int_{M} F^{\prime}\left(\frac{\left\|du\right\|^{2}}{4}\right) \sum_{i}^{m}   \bigg[(2c_F+1)|\tilde{\nabla}_{e_i} V_k^y|^2+h\left( R^{N}\left(V_k^y, d u\left(e_{i}\right)\right) V_k^y, d u\left(e_{i}\right)\right)\\
				&+I_1(V_{k}^{y},V_{k}^{y})\bigg] d v_{g} dv_N(y),
		\end{split}
		\end{equation*}
		where 
	\begin{equation*}
		\begin{split}
			I_1(V_{k}^{y},V_{k}^{y})=&	\sum_{i} \jifen{\nabla^2 H(V_{k}^{y},V_{k}^{y})}+\int_{M} \jiankuo{V_{k}^{y},Z(\covt{e_1}{V_{k}^{y}}\wedge du(e_2))}\d v_{g}\\
			&+\int_{M} \jiankuo{V_{k}^{y},Z(du(e_1))\wedge\covt{e_2}{V_{k}^{y}} } \d v_{g}. 
		\end{split}
	\end{equation*}	
Notice that 
\begin{equation*}
	\begin{split}
		| \tilde{\nabla}_{e_i}V_{k}^{y}|^2 \leq g_2(\rho,\delta)|u_{*}(e_i) |^2.
	\end{split}
\end{equation*}
Thus, we have 
\begin{equation*}
	\begin{split}
		I_1 \leq \int_M 2\|\Omega\|_\infty \sqrt{g_2(\rho,\delta)}|du(e_1)||du(e_2)| \d v_g\\
		\leq \int_M 2\|\Omega\|_\infty \sqrt{g_2(\rho,\delta)}|du |^2 \d v_g.
	\end{split}
\end{equation*}
By \cite[(3.3),(3.4)]{ara2001stability},
\begin{equation}\label{ck}
	\d v_N(y)\leq \Vol(S^{n-1}) \int_{0}^{\pi}\left(\frac{\sin(\sqrt{\delta}t)}{\sqrt{\delta}} \right)^{n-1}\d t\
\end{equation}
and
\begin{equation}\label{ckk}
	\begin{split}
			\int_{N} & \left\langle R^{N}\left(V^{y}, X\right) X, V^{y}\right\rangle v_{N}(y) \\
		& \geq(n-1)|X|^{2} \operatorname{Vol}\left(S^{n-1}\right) \int_{0}^{\pi} \delta \cos ^{2}(\rho) \sin ^{n-1}(\rho) d \rho .
	\end{split}
\end{equation}

	Thus, we have 
		\begin{equation*}
			\begin{split}
				&\lim\limits_{k\to \infty}	\int_N  I(V_{k}^{y},V_{k}^{y})dv_N(y)\\
				\leq & 	\int_{M} F^{\prime}\left(\frac{\left\|u^{*} h\right\|^{2}}{4}\right)\bigg( \bigg[\int_N  (2c_F+1)g_2(\rho,\delta) |du |^2dv_N(y)\\
				&\quad\quad\quad\quad\quad\quad\quad\quad+\sum_{i=1}^{m} \int_Nh\left( R^{N}\left(V^y, d u\left(e_{i}\right)\right) V^y, d u\left(e_{i}\right)\right)dv_N(y)\bigg]\\
				&+\int_N2\|\Omega\|_\infty \sqrt{g_2(\rho,\delta)}|du|^2 dv_N(y)\bigg)d v_{g} \\
				\leq & 	\int_{M} F^{\prime} \bigg[  (2c_F+1)g_2(\rho,\delta) |du |^2\Vol(S^{n-1}) \int_{0}^{\pi}\left(\frac{\sin(\sqrt{\delta}t)}{\sqrt{\delta}} \right)^{n-1}\d t\\
				& -(n-1)|\d u |^2Vol(S^{n-1}) \int_{0}^{\pi}\delta\cos^2(\rho)\sin^{n-1}(\rho)d\rho\bigg]\\
				&+2\|\Omega\|_\infty \sqrt{g_2(\rho,\delta)}|\d u |^2\Vol(S^{n-1})\int_{0}^{\pi} \left(\frac{\sin(\sqrt{\delta}t)}{\sqrt{\delta}} \right)^{n-1} \d t \bigg)\d v_{g} ,
			\end{split}
		\end{equation*}
	 the theorem follows.

\end{proof}

		\begin{thm}\label{thm2.2} 	Let $  u:(M^{n-1}, g) \to (N^n, h) $ be $F$-harmonic map with $ n-1 $ form and potential.
			Let $  u:(M^{n-1}, g) \to (N^n, h) $ be a
			totally geodesic isometric immersion. Then $u$ is FBH-unstable if the Ricci curvature
			of  $ N^n $ is positive.
		\end{thm}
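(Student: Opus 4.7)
The plan is to exhibit a single variation field along $u$ for which the index form is strictly negative. Since $u$ is a totally geodesic isometric immersion of codimension one, there is a smooth unit normal vector field $\nu$ along $u$ (globally if $u(M)$ is two-sided in $N$, otherwise locally, which suffices after a cutoff). I take $V=W=\nu$ in the formula \eqref{index1} for the second variation and exploit the fact that $\{u_*e_1,\ldots,u_*e_{n-1},\nu\}$ is an orthonormal basis of $T_{u(x)}N$ at every $x\in M$.

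The crucial observation is $\tilde\nabla_{e_i}\nu = 0$ for each $i$: by the Weingarten decomposition $\tilde\nabla_{e_i}\nu = -A_\nu(e_i)+\nabla^\perp_{e_i}\nu$, the shape operator vanishes because $u$ is totally geodesic, while the normal connection vanishes because the normal bundle has rank one and $|\nu|\equiv 1$. Consequently the $F''$-line of \eqref{index1} disappears, the $|\tilde\nabla V|^2$ piece of the $F'$-line vanishes, and every one of the $(n-1)$ terms carrying $Z$ vanishes as well, since each has a factor $\tilde\nabla_{e_i}\nu$ inside the wedge (alternatively, one can see directly that those terms vanish by antisymmetry of $\Omega$, since $\nu$ would appear twice among its arguments). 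The curvature sum collapses to $\sum_i h\!\bigl(R^N(\nu,u_*e_i)u_*e_i,\nu\bigr)=\Ric^N(\nu,\nu)$, which is strictly positive by hypothesis.

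Assembling the surviving pieces and using $|du|^2 = n-1$, I obtain
\[
I_{FBH}(\nu,\nu) \;=\; -\int_M F'\!\Bigl(\tfrac{n-1}{2}\Bigr)\Ric^N(\nu,\nu)\,\d v_g \;+\; \int_M \nabla^2 H(\nu,\nu)\,\d v_g.
\]
The first integral is strictly negative, which already concludes instability whenever $\nabla^2 H(\nu,\nu)\le 0$ along $u(M)$. In the general case, to absorb the residual Hessian of $H$, I upgrade the test field to $V=f\nu$ for a scalar $f\in C^\infty(M)$; the Weingarten and antisymmetry arguments above give
\[
I_{FBH}(f\nu,f\nu) \;=\; \int_M F'|\nabla f|^2\,\d v_g \;-\; \int_M f^2\bigl[F'\,\Ric^N(\nu,\nu)-\nabla^2 H(\nu,\nu)\bigr]\,\d v_g,
\]
so concentrating $f$ near a point where the bracket is positive (nonempty by positivity of $\Ric^N$, once $\nabla^2 H$ is controlled on the compact image $u(M)$) makes the right-hand side strictly negative. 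The main — and only — obstacle is precisely this interplay with the potential: ruling out a pathological Hessian of $H$ is what forces the scaling trick, whereas all the remaining calculations reduce to plugging $\tilde\nabla\nu=0$ into \eqref{index1}.
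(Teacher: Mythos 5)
Your core computation is exactly the paper's: take the unit normal $\nu$, observe that total geodesy plus the rank-one normal bundle force $\tilde\nabla\nu=0$, and watch every term of the second variation except the curvature term and the potential term vanish (your antisymmetry remark for the $Z$-terms is the right justification, since $\nu$ would occur twice as an argument of $\Omega$). Up to that point you are, if anything, more careful than the paper, which simply writes $I(V,V)=-\int_M F'\bigl(\tfrac{|du|^2}{2}\bigr)\mathrm{Ric}(V,V)\,\mathrm{d}v_g$ and silently discards the contribution $\int_M\nabla^2H(\nu,\nu)\,\mathrm{d}v_g$ that its own second variation formula \eqref{eq2} contains.

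Where your proposal has a genuine gap is the final ``concentration'' step. Localizing $f$ in a small ball $B_r$ does not help: you need $\int_M F'|\nabla f|^2\,\mathrm{d}v_g<\int_M f^2\bigl[F'\,\mathrm{Ric}(\nu,\nu)-\nabla^2H(\nu,\nu)\bigr]\mathrm{d}v_g$, but for $f$ supported in $B_r$ the left side is at least $c\,\lambda_1(B_r)\int f^2$ with $\lambda_1(B_r)\sim r^{-2}$, while the right side is at most $C\int f^2$; shrinking the support therefore makes $I_{FBH}(f\nu,f\nu)$ more positive, not negative. The only useful regime is $f\equiv 1$, which kills the gradient term and returns you to your first identity --- and hence to needing $\int_M\nabla^2H(\nu,\nu)\,\mathrm{d}v_g<\int_M F'\,\mathrm{Ric}(\nu,\nu)\,\mathrm{d}v_g$, i.e. some hypothesis on $H$ (your case $\nabla^2H(\nu,\nu)\le 0$ is the honest version of the theorem). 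The statement as printed imposes no condition on $H$ and so cannot be closed as written; this is a defect of the paper's own proof as well, since it is precisely the term the paper drops, but your scaling trick does not repair it.
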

		
		\begin{proof}
		Here we modify the proof of \cite[Theorem 10]{torbaghan2022stability}. Let $ V $ be a unit normal vector field of $ M^{n-1}  $ in 	$ N^n $. Since $ u $ is the totally geodesic, then  by \cite[Theorem 11]{torbaghan2022stability}			
			\begin{equation*}
				\begin{split}
					\tilde{\nabla} V=0.
				\end{split}
			\end{equation*}
			So, by \eqref{eq2}, we have 
			\begin{equation*}
				\begin{split}
					I(V,V)=-\int_{M}  F^{\prime}(\frac{|d u|^{2}}{2}) Ric(V,V)\d v_g.
				\end{split}
			\end{equation*}
		
	\end{proof}
	\section{$ F $ harmonic map}
		Motivated by the method in \cite{torbaghan2022stability}, we can establish  
	\begin{thm}\label{thm2.1}
		Let $u:\left(M^{m}, g\right) \longrightarrow\left(N^{n}, h\right)$ be a nonconstant $F$-harmonic map with potential between Riemannian manifolds. Suppose that $ \nabla^2H $ is semi-negative, 
		\[
		4c_F h+\theta(v)<\operatorname{Ric}(v, v),
		\]
		at each $x \in N$ and any unit vector $v \in T_{x} N$. Then, $u$ is $ F $-unstable.
	\end{thm}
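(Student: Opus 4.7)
The plan is to mimic the Leung--Xin--Howard style stability argument adapted to the $F$-harmonic-with-potential setting, along the lines of what \cite{torbaghan2022stability} does for $\alpha$-harmonic maps. One produces a canonical family of variations along $u$ by pulling back parallel vector fields of a Euclidean embedding of $N$, and then shows that the trace of their index forms is strictly negative, which forces at least one individual index form to be negative.

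Concretely, I would invoke Nash to fix an isometric immersion $\iota:N\hookrightarrow\mathbb{R}^{q}$ with second fundamental form $B$ and shape operator $A$, let $\{E_1,\ldots,E_q\}$ denote the standard orthonormal basis of $\mathbb{R}^q$, and set $V_a:=(E_a\circ u)^T\in\Gamma(u^{-1}TN)$. Since $E_a$ is $\mathbb{R}^q$-parallel, the Gauss--Weingarten formulas give $\tilde\nabla_X V_a=A_{E_a^\perp}(du(X))$ for $X\in TM$, while the completeness relation $\sum_a E_a\otimes E_a=\mathrm{Id}_{\mathbb{R}^q}$ decomposes along $TN\oplus TN^\perp$ and yields, with $\{\epsilon_\beta\}$ any local orthonormal frame of $TN$,
\begin{equation*}
\sum_a|\tilde\nabla_{e_i}V_a|^2=\sum_\beta|B(du(e_i),\epsilon_\beta)|^2,\qquad \sum_a R^N(V_a,du(e_i),du(e_i),V_a)=\Ric^N(du(e_i),du(e_i)).
\end{equation*}

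I would then substitute $V=V_a$ into the second variation formula for the functional $\int_M F(|du|^2/2)+H(u)\,dv_g$, which involves the three integrands $F''\langle\tilde\nabla V,du\rangle^2$, $F'\{|\tilde\nabla V|^2-\sum_i R^N(V,du(e_i),du(e_i),V)\}$ and $\nabla^2 H(V,V)$. Cauchy--Schwarz together with the structural inequality $F''(t)t\le c_F F'(t)$ applied at $t=|du|^2/2$ (so $F''|du|^2\le 2c_F F'$) bounds the $F''$ cross term by a constant multiple of $F'|\tilde\nabla V_a|^2$, while the semi-negativity of $\nabla^2 H$ makes the potential contribution nonpositive. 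Summing over $a\in\{1,\ldots,q\}$ and using the two trace identities above collapses $\sum_a I(V_a,V_a)$ to an integral of $F'(|du|^2/2)$ times a pointwise quantity of the schematic form
\begin{equation*}
\sum_i\Bigl\{\bigl(4c_F\,h+\theta\bigr)\!\bigl(du(e_i)/|du(e_i)|\bigr)\,|du(e_i)|^2-\Ric^N(du(e_i),du(e_i))\Bigr\},
\end{equation*}
in which $\theta$ absorbs the $B$-dependent term coming from $\sum_a|\tilde\nabla_{e_i}V_a|^2$ and the constant $4c_F$ emerges from combining Cauchy--Schwarz with the $c_F$-estimate.

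By the hypothesis this integrand is strictly negative wherever $du\neq 0$; since $u$ is nonconstant, $du$ is nonzero on a set of positive measure, so $\sum_a I(V_a,V_a)<0$, and the pigeonhole principle yields some $a_0$ with $I(V_{a_0},V_{a_0})<0$, whence $u$ is $F$-unstable. The principal technical hurdle is the constant bookkeeping in the $F''$ estimate: extracting exactly the factor $4c_F$ in front of the second fundamental form term (rather than, say, $2c_F+1$ from a naive splitting) requires arranging the Cauchy--Schwarz step and the $c_F$-inequality together, and choosing the normalization that identifies $\theta(v)$ with $\sum_\beta|B(v,\epsilon_\beta)|^2$ (up to the scaling determining what $h$ means in the statement). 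Once the constants are aligned, the final implication is the pointwise comparison between the hypothesized curvature inequality and the collapsed integrand.
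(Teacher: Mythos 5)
Your proposal follows essentially the same route as the paper: both pull back the tangential projections of parallel vector fields of a Euclidean isometric immersion of $N$, substitute into the second variation formula, convert $\tilde\nabla V$ into second-fundamental-form terms, use the trace identities to produce $\theta$ and $\operatorname{Ric}$, discard the potential term by semi-negativity of $\nabla^2 H$, and extract the factor $4c_F$ from $F''(t)t\le c_F F'(t)$ at $t=|du|^2/2$. Your write-up is in fact slightly more careful than the paper's at the final step, where strict negativity of the summed index forms (rather than merely $\operatorname{Tr}_g I\le 0$) is what justifies the instability conclusion.
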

	\begin{proof} Let $ \{e_i\}_{i=1}^{m} $ be a local orthonormal frame field
		on $ M $ and $ \omega $ be a parallel vector field in $ R^{n+p} $. Recall the index form of $ F $-harmonic map(c.f.  \cite{ara2001stability}),
		\begin{equation}\label{}
			\begin{split}
				I(V,V)=&\jifen{\FF\jiankuo{\tilde{\nabla}V,du}^2}\\
				&+\int_{M} F^{\prime}(\frac{|d u|^{2}}{2}) \cdot\left\{\langle\tilde{\nabla} V, \tilde{\nabla} V\rangle-\sum_{t=1}^{m} h( R^N\left(V, u_* e_{i}\right) u_* e_{i}, V)\right\} v_{g}\\
				&+ \jifen{\nabla^2 H(V,V)}.
			\end{split}
		\end{equation}
		We take $ V=\omega^\top $
		\begin{equation}\label{}
			\begin{split}
				I(\omega^\top,\omega^\top)=&\jifen{f\FF\jiankuo{\tilde{\nabla}\omega^\top,du}^2}\\
				&+\int_{M}f F^{\prime}(\frac{|d u|^{2}}{2}) \cdot\left\{\langle\tilde{\nabla} \omega^\top, \tilde{\nabla} \omega^\top\rangle+\sum_{t=1}^{m} h( R^N\left(\omega^\top, u_* e_{i}\right) \omega^\top,u_* e_{i} )\right\} v_{g}\\
				&+ \jifen{\nabla^2 H(V,V)},
			\end{split}
		\end{equation}
		So, by \cite[Theorem 10]{torbaghan2022stability}, we can get 
		\begin{equation}\label{}
			\begin{split}
				I(\omega^\top,\omega^\top)=&\jifen{F^{\prime\prime}(\frac{|d u|^{2}}{2}) \left \langle B(du(e_i),du(e_i)), \omega^\perp \right \rangle^2  }\\
				&+\int_{M} F^{\prime}(\frac{|d u|^{2}}{2}) \cdot\left\{|A^{\omega^\perp}(du(e_i)) |^2+\sum_{t=1}^{m} h( R^N\left(\omega^\top, u_* e_{i}\right) \omega^\top,u_* e_{i} )\right\} v_{g}\\
				&+ \jifen{\nabla^2 H(V,V)}.
			\end{split}
		\end{equation}
		Thus, we get 			
		\begin{equation*}
			\begin{split}
				\operatorname{Tr}_g I\leq &\jifen{\FF  |B(du(e_i),du(e_i))|^2  }\\
				&+\int_{M} F^{\prime}(\frac{|d u|^{2}}{2}) \cdot\left\{|\theta(v_i) -Ric(v_i,v_i)\right\}\d v_{g}\\
				\leq &\jifen{\FF |du |^4 h(x)  }\\
				&+\int_{M} F^{\prime}(\frac{|d u|^{2}}{2})|du |^2 \cdot\left\{|\theta(v_i) -Ric(v_i,v_i)\right\} \d v_{g}.
			\end{split}
		\end{equation*}
		By our assumption, we get 
		\begin{equation*}
			\begin{split}
				2\frac{F^{\prime\prime}(\frac{|d u|^{2}}{2})}{F^{\prime}(\frac{|d u|^{2}}{2})}|du|^2 h+\theta(v)<\operatorname{Ric}(v, v),
			\end{split}
		\end{equation*}
		Thus, 
		\begin{equation*}
			\begin{split}
				\operatorname{Tr}_gI\leq 0.
			\end{split}
		\end{equation*}
	\end{proof}	
	\section{$F$ symphonic map}\label{sec4}
	\subsection{ $F$ symphonic map with potential from $ \Phi
		$-SSU manifold  }
	\begin{lem}[\cite{ara2001instability}]\label{dkl}
		For any constant $a>0$, there is a strictly increasing and convex $C^{2}$ function $F:[0, \infty) \rightarrow[0, \infty)$ such that $t \cdot F^{\prime \prime}(t)<a \cdot F^{\prime}(t)$ for any $t>0$.

	\end{lem}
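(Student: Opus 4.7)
The plan is to give an explicit construction rather than an abstract existence argument. I would fix an exponent $b$ satisfying $1<b<1+a$ (for instance $b=1+a/2$, which is well defined since $a>0$) and set
$$F(t):=(1+t)^{b}.$$
This $F$ maps $[0,\infty)$ into $[1,\infty)\subset[0,\infty)$ and is manifestly $C^{\infty}$ on $[0,\infty)$, so the regularity requirement is free.

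The verification then proceeds in three short steps. Strict monotonicity and convexity follow at once from
$$F'(t)=b(1+t)^{b-1}>0,\qquad F''(t)=b(b-1)(1+t)^{b-2}>0,$$
where positivity of $F''$ uses $b>1$. The decisive inequality reduces to
$$\frac{tF''(t)}{F'(t)}=\frac{(b-1)t}{1+t}<b-1<a,$$
in which the first strict inequality is simply $t<1+t$ and the second is the defining condition $b<1+a$. Multiplying through by $F'(t)>0$ yields $tF''(t)<aF'(t)$ for every $t>0$, which is the desired conclusion.

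The argument is essentially mechanical once the correct exponent is chosen, so there is no real obstacle. The only point that requires care is the calibration of $b$: one needs $b-1$ to lie strictly between $0$ and $a$ in order to secure strict convexity of $F$ together with a positive gap in the inequality $tF''(t)<aF'(t)$; the hypothesis $a>0$ guarantees that the open interval $(1,1+a)$ is nonempty, so any choice in it works (and the associated constant $c_{F}$ from Section~\ref{sec2} satisfies $c_{F}\leq b-1<a$, which is precisely what the lemma will be used for later).
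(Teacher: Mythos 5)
Your proposal is correct and in the same spirit as the paper, which does not actually prove this lemma but cites \cite{ara2001instability} and records in the following remark the explicit examples $F_{1}(t)=t^{b+1}$ with $0<b<a$ (plus two variants); your shifted power $(1+t)^{b}$ with $1<b<1+a$ is the same one-line power-law verification of the ratio $tF''(t)/F'(t)<a$, with the mild bonus that it is genuinely $C^{2}$ at $t=0$ even when $b-1<1$, whereas $t^{b+1}$ with $0<b<1$ fails to be $C^{2}$ at the origin. The only cosmetic point is that your $F$ has $F(0)=1$ rather than $0$; if the normalization $F(0)=0$ is desired (as in all of the paper's listed examples), replace $F(t)$ by $(1+t)^{b}-1$, which leaves $F'$ and $F''$, and hence the entire verification, unchanged.
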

	\begin{rem}By \cite[Lemma 4.9]{ara2001instability}, the follow function $ F $ satisfies the conculsion.
		
		(1) $F_{1}(t)=t^{b+1}, 0<b<a$,
		
		(2) $F_{2, n}(t)=\sum_{i=1}^{n} a_{i} t^{i}, n<a+1, a_{1}>0, a_{i} \geq 0(i=2, \cdots, n)$,
		
		(3) $F_{3}(t)=\int_{0}^{t} e^{\int_{0}^{s} G(u) d u} d s$, where $G(u)$ is a continuous function and $u \cdot G(u)<a$
	\end{rem}

	\begin{lem}[c.f. see \cite{Li2017NonexistenceOS} and\cite{han2014monotonicity}]
		Let $ u_{s,t}:(M^m,g)\to (N^n,h) $ be  a smooth deformation of u such that such that $ u_0=u, V=\frac{\partial u_t}{\partial t}|_{t=0} ,W=\frac{\partial u_t}{\partial s}|_{t=0}$, then 
		\begin{equation}\label{second}
			\begin{split}
				\left.\frac{\partial^{2}}{\partial s \partial t} \Phi_{sym}\left(u_{s, t}\right)\right|_{s, t=0}=&\int_{M} \operatorname{HessH}(V, W) d v_{g} \\
				&+\int_{M} F^{\prime \prime}\left(\frac{\left\|u^{*} h\right\|^{2}}{4}\right)\left\langle \widetilde{\nabla} V, \sigma_{u}\right\rangle \left\langle \widetilde{\nabla} W, \sigma_{u}\right\rangle d v_{g} \\
				&	+\int_{M} F^{\prime}\left(\frac{\left\|u^{*} h\right\|^{2}}{4}\right) \sum_{i, j=1}^{m} h\left(\widetilde{\nabla}_{e_{i}} V, \widetilde{\nabla}_{e_{j}} W\right) h\left(d u\left(e_{i}\right), d u\left(e_{j}\right)\right) d v_{g} \\
				&	+\int_{M} F^{\prime}\left(\frac{\left\|u^{*} h\right\|^{2}}{4}\right) \sum_{i, j=1}^{m} h\left(\widetilde{\nabla}_{e_{i}} V, d u\left(e_{j}\right)\right) h\left(\widetilde{\nabla}_{e_{i}} W, d u\left(e_{j}\right)\right) d v_{g} \\
				&	+\int_{M} F^{\prime}\left(\frac{\left\|u^{*} h\right\|^{2}}{4}\right) \sum_{i, j=1}^{m} h\left(\widetilde{\nabla}_{e_{i}} V, d u\left(e_{j}\right)\right) h\left(d u\left(e_{i}\right), \widetilde{\nabla}_{e_{j}} W\right) d v_{g} \\
				&	+\int_{M} F^{\prime}\left(\frac{\left\|u^{*} h\right\|^{2}}{4}\right) \sum_{i, j=1}^{m} h\left(R^{N}\left(V, d u\left(e_{i}\right)\right) W, d u\left(e_{j}\right)\right) h\left(d u\left(e_{i}\right), d u\left(e_{j}\right)\right) d v_{g},
			\end{split}
		\end{equation}
		
		where $ \sigma_{u}(\cdot)=h(du(\cdot),du(e_j))du(e_j). $
	\end{lem}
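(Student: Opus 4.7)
The plan is to split the functional as $\Phi_{sym}(u)=\mathcal{E}(u)+\mathcal{H}(u)$ with $\mathcal{E}(u)=\int_M F(|u^*h|^2/4)\, dv_g$ and $\mathcal{H}(u)=\int_M H(u)\, dv_g$, and then to differentiate each piece twice. For $\mathcal{H}$, a direct Leibniz rule yields $\partial_s\partial_t \mathcal{H}|_{s,t=0}=\int_M \mathrm{Hess}\,H(V,W)\,dv_g + \int_M \langle \nabla H, \tilde{\nabla}_{\partial_s}\partial_t u\rangle\,dv_g$. The stated formula omits the second piece (together with its analogue coming from $\mathcal{E}$), consistent with reading the identity as the Hessian of $\Phi_{sym}$ at a critical point, where these terms vanish by the Euler--Lagrange equation; I will adopt this convention.

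The main step is the computation for $\mathcal{E}$. I first compute the inner derivative: since $[\partial_t,e_i]=0$ implies $\tilde{\nabla}_{\partial_t}du(e_i)=\tilde{\nabla}_{e_i}V$, and since $|u^*h|^2=\sum_{i,j}h(du(e_i),du(e_j))^2$, differentiating once gives $\partial_t(|u^*h|^2/4)=\langle \tilde{\nabla}V,\sigma_u\rangle$, so that the first variation of $\mathcal{E}$ equals $\int_M F'\cdot \langle \tilde{\nabla}V,\sigma_u\rangle\, dv_g$. Applying $\partial_s$ to this integrand and using the chain rule splits the result into an $F''$-piece and an $F'$-piece. The $F''$-piece is immediate: differentiating the outer $F'$ in $s$ produces $F''\cdot \langle \tilde{\nabla}W,\sigma_u\rangle\langle \tilde{\nabla}V,\sigma_u\rangle$, matching the second summand of the statement.

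For the $F'$-piece, I expand $\langle \tilde{\nabla}V,\sigma_u\rangle=\sum_{i,j}h(\tilde{\nabla}_{e_i}V,du(e_j))h(du(e_i),du(e_j))$ and apply the Leibniz rule in $s$. Three of the four resulting terms are direct: hitting $du(e_j)$ inside the first factor (using $\tilde{\nabla}_{\partial_s}du(e_k)=\tilde{\nabla}_{e_k}W$) gives the third summand, while hitting the two factors in $h(du(e_i),du(e_j))$ yields the fourth and fifth summands. The fourth resulting term, namely $\sum_{i,j}h(\tilde{\nabla}_{\partial_s}\tilde{\nabla}_{e_i}V,du(e_j))h(du(e_i),du(e_j))$, is then converted via the covariant commutator on the pull-back bundle $u^*TN$:
\[
\tilde{\nabla}_{\partial_s}\tilde{\nabla}_{e_i}V=\tilde{\nabla}_{e_i}\tilde{\nabla}_{\partial_s}V+R^N\bigl(du(\partial_s),du(e_i)\bigr)V.
\]
The $\tilde{\nabla}_{e_i}\tilde{\nabla}_{\partial_s}V$ piece contributes the suppressed critical-point term, while the curvature piece, after applying the pair symmetry $R(X,Y,Z,T)=R(Z,T,X,Y)$ and relabeling indices (also using that $h(du(e_i),du(e_j))$ is symmetric in $i,j$), produces exactly the sixth summand $\sum_{i,j}h(R^N(V,du(e_i))W,du(e_j))h(du(e_i),du(e_j))$.

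The main obstacle will be careful bookkeeping of the Leibniz-rule terms together with correct application of the covariant commutator on sections of $u^*TN$. Matching the final curvature expression to the stated form by pair symmetry also demands attention, since one must swap the arguments of $R^N$ and relabel summation indices in a way that is compatible with the built-in symmetry of $h(du(e_i),du(e_j))$ and with the asymmetric placement of $V$ and $W$ in the stated formula.
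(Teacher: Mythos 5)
Your derivation is correct, and it is the standard route: differentiate $\int_M F(|u^*h|^2/4)\,\d v_g$ twice using $\tilde{\nabla}_{\partial_t}du(e_i)=\tilde{\nabla}_{e_i}V$, collect the Leibniz terms, and convert $\tilde{\nabla}_{\partial_s}\tilde{\nabla}_{e_i}V$ into $\tilde{\nabla}_{e_i}\tilde{\nabla}_{\partial_s}V$ plus a curvature term, which after the pair symmetry of $R^N$ and an index relabeling gives the stated sixth summand. Note that the paper itself supplies no proof of this lemma --- it is stated with a ``c.f.'' citation to Li et al.\ and Han et al.\ --- so there is nothing to compare against except the references; your argument is the one those sources carry out. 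You are also right to flag the only delicate point: the formula as printed silently drops the term $\int_M\langle \delta^\nabla(F'\sigma_u)-\nabla H,\,\tilde{\nabla}_{\partial_s}\partial_t u\rangle\,\d v_g$ coming from the acceleration of the variation, which vanishes precisely when $u$ satisfies the Euler--Lagrange equation; making that hypothesis explicit (as you do) is necessary for the identity to hold as an equality rather than only at critical points.
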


		
		\begin{defn}
			Let RHS of \eqref{second} be the index form $ I(V,W) $. $ F $-symphonic map $ u  $  with potential is called $ \Phi_{sym}$-stable  if $ I(V,V)\geq 0 $ for any nonzero vector filed $V$. Otherwise, it is called $ \Phi_{sym}$-unstable. If the identiy map is stable, we call $ M $   $ \Phi_{sym}  $-stable, otherwise, it is called  $ \Phi_{sym}  $-unstable.
		\end{defn}
	
	\begin{thm}\label{thm2.1}
		Let $ (M^m, g) $ be a compact $ \Phi
		$-$ SSU  $ manifold. For  $x \in M$
		$$ a=\min _{X \in U M, Y \in U M} \frac{-\left\langle Q_{x}^{M}(X), X\right\rangle_{M}}{8|B(X, X)|_{\mathbf{R}^{r}}|B(Y, Y)|_{\mathbf{R}^{r}}}>0,$$  Let    $ F $ be the positive  function determined by Lemma \ref{dkl}, $ \nabla^2 H \leq 0 $.
		Let $ u $ be  stable $ \Phi_{sym} $-
		harmonic map with potential  from $ (M^m, g)  $ into any Riemannian manifold $ N $, then $ u $ is constant.
	\end{thm}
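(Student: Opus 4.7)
\medskip

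\noindent\textbf{Proof proposal.} The plan is to adapt the Howard--Wei / Han--Wei style testing argument: use the $\Phi$-SSU isometric immersion $\iota\colon M^m\hookrightarrow\mathbb{R}^{m+p}$ with second fundamental form $B$ to manufacture a family of trial variation fields from parallel vectors in the ambient Euclidean space, then sum the index form $I(V,V)$ over an orthonormal Euclidean basis and show the total is non-positive with strict sign unless $du\equiv 0$. Concretely, for each vector $\omega\in\mathbb{R}^{m+p}$ regarded as a parallel section along $M$, decompose $\omega=\omega^{T}+\omega^{\perp}$ with respect to $\iota$ and set
\[
V_{\omega}:=du(\omega^{T})\in\Gamma(u^{*}TN).
\]
Since $\omega$ is Euclidean-parallel, the Weingarten relations give $\nabla^{M}_{e_{i}}\omega^{T}=A_{\omega^{\perp}}(e_{i})$ and $\nabla^{\perp}_{e_{i}}\omega^{\perp}=-B(e_{i},\omega^{T})$, whence
\[
\tilde{\nabla}_{e_{i}}V_{\omega}=(\nabla du)(e_{i},\omega^{T})+du\bigl(A_{\omega^{\perp}}(e_{i})\bigr).
\]

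\medskip

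Next I would pick an orthonormal basis $\{\omega_{A}\}_{A=1}^{m+p}$ of $\mathbb{R}^{m+p}$ and compute $\sum_{A}I(V_{\omega_{A}},V_{\omega_{A}})$ using the averaging identities
\[
\sum_{A}\omega_{A}^{T}\otimes\omega_{A}^{T}=\sum_{k=1}^{m}e_{k}\otimes e_{k},\qquad \sum_{A}\omega_{A}^{T}\otimes\omega_{A}^{\perp}=0,\qquad \sum_{A}\omega_{A}^{\perp}\otimes\omega_{A}^{\perp}=\sum_{\alpha}\nu_{\alpha}\otimes\nu_{\alpha}.
\]
Plugging the formula for $\tilde{\nabla}_{e_{i}}V_{\omega}$ into each term of the second variation \eqref{second} and summing, the ``intrinsic'' pieces involving $(\nabla du)(e_{i},\omega^{T})$ assemble into symmetric contractions that can be rewritten, after integration by parts against the Euler--Lagrange equation for $F$-symphonic maps with potential, so that the $(\nabla du)^{2}$ terms together with the $F''$-contribution collapse into a single expression of the form
\[
\int_{M}\Bigl(F''\bigl(\tfrac{\|u^{*}h\|^{2}}{4}\bigr)\tfrac{\|u^{*}h\|^{2}}{4}-\,a\,F'\bigl(\tfrac{\|u^{*}h\|^{2}}{4}\bigr)\Bigr)\cdot(\text{non-negative quartic in }du)\,\mathrm{d}v_{g},
\]
which is non-positive by the choice of $F$ in Lemma~\ref{dkl}. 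Meanwhile the ``extrinsic'' pieces coming from $A_{\omega^{\perp}}$ give exactly
\[
\sum_{k,\alpha}\Bigl\{4\,h\bigl(du(A_{\nu_{\alpha}}e_{k}),du(e_{j})\bigr)-\bigl\langle B(du\cdot,du\cdot),B(e_{k},e_{k})\bigr\rangle\Bigr\}h(du(e_{i}),du(e_{j})),
\]
which after one more rearrangement is a weighted average of $\langle Q^{M}_{x}(X),X\rangle_{M}$ evaluated along $du$; by the definition of $a$ and the $\Phi$-SSU hypothesis this contribution is strictly negative whenever $du\neq 0$. The Hessian-of-$H$ term, summed, becomes $\int_{M}\operatorname{tr}_{u^{*}h}\nabla^{2}H\,\mathrm{d}v_{g}\le 0$ by the standing hypothesis on $H$, and thus only improves the bound.

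\medskip

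Putting these ingredients together yields $\sum_{A}I(V_{\omega_{A}},V_{\omega_{A}})\le 0$, with strict inequality unless $du\equiv 0$ everywhere on $M$; stability of $u$ then forces $du\equiv 0$, i.e.\ $u$ is constant. The main obstacle I anticipate is controlling the curvature-of-$N$ term
\[
\sum_{A,i,j}F'\,h\bigl(R^{N}(V_{\omega_{A}},du(e_{i}))V_{\omega_{A}},du(e_{j})\bigr)h(du(e_{i}),du(e_{j})),
\]
because the hypothesis places no restriction on $N$. The trick is that after the averaging identity $\sum_{A}V_{\omega_{A}}\otimes V_{\omega_{A}}=\sum_{k}du(e_{k})\otimes du(e_{k})$ this reduces to a purely intrinsic expression in $u^{*}h$ and must be absorbed via the commutation formula $[\tilde{\nabla}_{e_{i}},\tilde{\nabla}_{e_{k}}]du(e_{j})=R^{N}(du(e_{i}),du(e_{k}))du(e_{j})$ together with integration by parts against the critical-point equation; carrying out this rearrangement rigorously, so that the $R^{N}$ terms are cancelled by the divergence of the $(\nabla du)(e_i,\omega^T)\otimes du(A_{\omega^{\perp}}(e_j))$ cross-term, is the most delicate bookkeeping step.
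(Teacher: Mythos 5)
Your proposal is correct and follows essentially the same route as the paper: test fields $du(\omega^{T})$ built from Euclidean--parallel vectors via the $\Phi$-SSU immersion, averaging over an orthonormal basis of $\mathbb{R}^{m+p}$, the Weitzenb\"ock/commutation identity plus integration by parts against the Euler--Lagrange equation to eliminate the $R^{N}$ and $\nabla du$ terms, the bound $tF''(t)<aF'(t)$ from Lemma~\ref{dkl} to absorb the $F''$ contribution, and the $\Phi$-SSU inequality for the final strict negativity. The only cosmetic difference is that the paper cancels the $\operatorname{Hess}H$ term exactly against the $\Delta du$ term in the integration by parts, whereas you discard it separately using $\nabla^{2}H\le 0$; this does not change the argument.
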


		\begin{proof}
			
		The proof is almost the same as  in \cite{2021The} after replacing $ S_\Phi(u) $ by $ u^*h $ and replacing  $ \sigma_\Phi $ by $ \sigma_{u } $. We use the same notations as in the proof of Theorem 6.1 in \cite{2021The}. We choose an orthogonal frame field $\left\{e_{1}, \cdots, e_{m+p}\right\}$ of $R^{m+p}$ such that $\left\{e_{i}\right\}_{i=1}^{m}$ are tangent to $M^{m},\left\{e_{\alpha}\right\}_{\alpha=m+1}^{m+p}$ are normal to $M^{m}$ and $\left.\nabla_{e_{i}} e_{j}\right|_{x}=0$, where $x$ is a fixed point of $M$. We take a fixed orthonormal basis of $R^{m+p}$ denoted by $E_{D}, D=$ $1, \cdots, m+p$ and set
		\[
		V_{D}=\sum_{i=1}^{m} v_{D}^{i} e_{i},  v_{D}^{i}=\left\langle E_{D}, e_{i}\right\rangle,  v_{D}^{\alpha}=\left\langle E_{D}, e_{\alpha}\right\rangle,  \alpha=m+1, \cdots, m+p,
		\]
		where $\langle\cdot, \cdot\rangle$ is the canonical Euclidean inner product. Here, we cite the three formulas (37)(38)(39) in \cite{2021The} as follows:
		\begin{equation*}
			\begin{split}
					\sum_{D=1}^{m+p} v_{D}^{i} v_{D}^{j} &=\sum_{D=1}^{m+p}\left\langle E_{D}, e_{i}\right\rangle\left\langle E_{D}, e_{j}\right\rangle=\delta_{i j}, \quad i, j=1, \cdots, m, \\
				\nabla_{e_{i}} V_{D} &=\sum_{\alpha=m+1}^{m+p} \sum_{j=1}^{m} B_{i j}^{\alpha} v_{D}^{\alpha} e_{j}, \\
				\tilde{\nabla}_{e_{i}} d u\left(V_{D}\right) &=\sum_{\alpha=m+1}^{m+p} \sum_{k=1}^{m} v_{D}^{\alpha} B_{i k}^{\alpha} d u\left(e_{k}\right)+\sum_{k=1}^{m} v_{D}^{k} \widetilde{\nabla}_{e_{i}} d u\left(e_{k}\right),
			\end{split}
		\end{equation*}

	Using the witenzbock formula, 
	$$-R^{N}\left(u_{*} V_{D}, \mathrm{d} u\left(e_{i}\right)\right) \mathrm{d} u\left(e_{i}\right)+u_{*} \operatorname{Ric}^{M^{n}}\left(V_{D}\right)=\Delta \mathrm{d} u\left(V_{D}\right)+\tilde{\nabla}^{2} \mathrm{d} u\left(V_{D}\right).$$	
	and	
	\begin{equation*}
		\begin{split}
			&\sum_{D} \int_{M} \text{Hess} H(du(V_D),du(V_D))		+\sum_{D} \int_{M}F^{\prime}(\frac{|u^{*}h |^2}{4})\left\langle(\Delta d u)\left(V_{D}\right), \sigma_{u}\left(V_{D}\right)\right\rangle d v_{g}\\
			=&\int_{M} \sum_{i, j, D} v_{D}^{i} v_{D}^{j}\left\langle(\Delta d u)\left(e_{i}\right), \sigma_{u}\left(e_{j}\right)\right\rangle d v_{g} \\
			=&\int_{M} \sum_{i}\left\langle(\Delta d u)\left(e_{i}\right), \sigma_{u}\left(e_{i}\right)\right\rangle d v_{g}=\int_{M}\left\langle(\Delta d u), \sigma_{u}\right\rangle d v_{g}=\int_{M}\left\langle\delta d u, \delta \sigma_{u}\right\rangle d v_{g} \\
			=&-\int_{M}\left\langle\delta d u, \operatorname{div}\left(\sigma_{u}\right)\right\rangle d v_{g}=0.
		\end{split}
	\end{equation*}
	
By \eqref{second}, 	we have

	\begin{equation*}
		\begin{split}
			&\sum_{D=1}^{m+p}I(du(V_D),du(V_D))\\
			=&	\int_{M} F^{\prime \prime}\left(\frac{\left\|u^{*} h\right\|^{2}}{4}\right)\left\langle \widetilde{\nabla} V_D, \sigma_{u}\right\rangle \left\langle \widetilde{\nabla} V_D, \sigma_{u}\right\rangle d v_{g} \\
			&	+\int_{M} F^{\prime}\left(\frac{\left\|u^{*} h\right\|^{2}}{4}\right) \sum_{i, j=1}^{m} h\left(\widetilde{\nabla}_{e_{i}} V_D, \widetilde{\nabla}_{e_{j}} V_D\right) h\left(d u\left(e_{i}\right), d u\left(e_{j}\right)\right) d v_{g} \\
			&	+\int_{M} F^{\prime}\left(\frac{\left\|u^{*} h\right\|^{2}}{4}\right) \sum_{i, j=1}^{m} h\left(\widetilde{\nabla}_{e_{i}} V_D, d u\left(e_{j}\right)\right) h\left(\widetilde{\nabla}_{e_{i}} V_D, d u\left(e_{j}\right)\right) d v_{g} \\
			&	+\int_{M} F^{\prime}\left(\frac{\left\|u^{*} h\right\|^{2}}{4}\right) \sum_{i, j=1}^{m} h\left(\widetilde{\nabla}_{e_{i}} V_D, d u\left(e_{j}\right)\right) h\left(d u\left(e_{i}\right), \widetilde{\nabla}_{e_{j}} V_D\right) d v_{g} \\
			&	+\int_{M} F^{\prime}\left(\frac{\left\|u^{*} h\right\|^{2}}{4}\right) \sum_{i, j=1}^{m} h\left(du(Ric^M(e_i)), d u\left(e_{j}\right)\right) h\left(d u\left(e_{i}\right), d u\left(e_{j}\right)\right) d v_{g}	\\
			&	+\int_{M} F^{\prime}\left(\frac{\left\|u^{*} h\right\|^{2}}{4}\right) \sum_{i, j=1}^{m} h\left((\nabla^2 du)(e_i)), d u\left(e_{j}\right)\right) h\left(d u\left(e_{i}\right), d u\left(e_{j}\right)\right) d v_{g}\\
			&:=J_1+J_2+J_3+	J_4+J_5+J_6+J_7.
		\end{split}
	\end{equation*}

	\begin{equation}\label{k9}
		\begin{split}
			J_1=&F^{\prime \prime}\left(\frac{\left\|u^{*}h\right\|^{2}}{4}\right) \sum_{D}\left\langle\tilde{\nabla} d u\left(V_{D}\right), \sigma_{u}\right\rangle^{2} \\
			=& \sum_{A} F^{\prime \prime}\left(\frac{\left\|u^{*}h\right\|^{2}}{4}\right)\left[\sum_{i}\left\langle\tilde{\nabla}_{e_{i}} d u\left(V_{D}\right), \sigma_{u}\left(e_{i}\right)\right\rangle\right]\left[\sum_{j}\left\langle\tilde{\nabla}_{e_{j}} d u\left(V_{D}\right), \sigma_{u}\left(e_{j}\right)\right\rangle\right] \\
			=& F^{\prime \prime}\left(\frac{\left\|u^{*}h\right\|^{2}}{4}\right) \sum_{A} \left[\sum_{i}\left(-v_{D}^{\alpha}B_{ik}^{\alpha} h\left(d u\left(e_{k}\right), \sigma_{u}\left(e_{i}\right)\right)+v_{D}^{k} h\left(\tilde{\nabla}_{e_{i}} d u\left(e_{k}\right), \sigma_{u}\left(e_{i}\right)\right)\right)\right] \\
			&\times \sum_{A} \left[\sum_{i}\left(-v_{D}^{\alpha}B_{jl}^{\alpha} h\left(d u\left(e_{l}\right), \sigma_{u}\left(e_{j}\right)\right)+v_{D}^{k} h\left(\tilde{\nabla}_{e_{j}} d u\left(e_{k}\right), \sigma_{u}\left(e_{j}\right)\right)\right)\right] \\
			=& F^{\prime \prime}\left(\frac{\left\|u^{*}h\right\|^{2}}{4}\right) \sum_{D} B_{ik}^{\alpha} B_{jl}^{\alpha}\left[\sum_{i} h\left(d u\left(e_{i}\right), \sigma_{u}\left(e_{k}\right)\right)\right] \left[\sum_{i} h\left(d u\left(e_{j}\right), \sigma_{u}\left(e_{l}\right)\right)\right] \\
			&+F^{\prime \prime}\left(\frac{\left\|u^{*}h\right\|^{2}}{4}\right) \sum_{D}\left[\sum_{l} v_{A}^{l}\left[\sum_{i} h\left(\left(\nabla_{e_{i}} d u\right)\left(e_{l}\right), \sigma_{u}\left(e_{i}\right)\right)\right]\right]^{2} \\
			=& F^{\prime \prime}\left(\frac{\left\|u^{*}h\right\|^{2}}{4}\right) \sum_{D} B_{ik}^{\alpha} B_{jl}^{\alpha}\left[\sum_{i} h\left(d u\left(e_{i}\right), \sigma_{u}\left(e_{k}\right)\right)\right] \left[\sum_{i} h\left(d u\left(e_{j}\right), \sigma_{u}\left(e_{l}\right)\right)\right] \\
			&+ F^{\prime \prime}\left(\frac{\left\|u^{*}h\right\|^{2}}{4}\right)\left[\sum_{l}\left[\sum_{i} h\left(\left(\nabla_{e_{l}} d u\right)\left(e_{i}\right), \sigma_{u}\left(e_{i}\right)\right)\right]^{2}\right],
		\end{split}
	\end{equation}
	
By (43)-(48) in \cite{2021The}, we have 
	\begin{equation}\label{k8}
		\begin{split}
			J_2&=F^{\prime}\left(\frac{\left\|u^{*}h\right\|^{2}}{4}\right) \sum_{i, j,D} h\left(\widetilde{\nabla}_{e_i} du(V_D), \widetilde{\nabla}_{e_j} du(V_D)\right)h\left(\mathrm{~d} u\left(e_{i}\right), \mathrm{d} u\left(e_{j}\right)\right)  \\
			&=F^{\prime}\left(\frac{\left\|u^{*}h\right\|^{2}}{4}\right) 
			\bigg\{\sum_{i, j, \alpha} h\left(d u\left(A^{\alpha}\left(e_{i}\right)\right), d u\left(A^{\alpha}\left(e_{j}\right)\right)\right)h\left(d u\left(e_{i}\right), d u\left(e_{j}\right)\right) \\
			&+\sum_{i, j, k} h\left(\left(\nabla_{e_{k}} d u\right)\left(e_{i}\right),\left(\nabla_{e_{k}} d u\right)\left(e_{j}\right)\right) h\left(d u\left(e_{i}\right), d u\left(e_{j}\right)\right)\bigg\}, 
		\end{split}
	\end{equation}	
	and
	\begin{equation}\label{k10}
		\begin{split}
			J_3=&F^{\prime}\left(\frac{\left\|u^{*}h\right\|^{2}}{4}\right) \sum_{i, j=1}^{m}\sum_D h\left(\widetilde{\nabla}_{e_i} du(V_D), \mathrm{~d} u\left(e_{j}\right)\right) h\left(\widetilde{\nabla}_{e_i} du(V_D), \mathrm{~d} u\left(e_{j}\right)\right)\\
			=&F^{\prime}\left(\frac{\left\|u^{*}h\right\|^{2}}{4}\right)\sum h(du(A^\alpha(e_i)),du(e_j))h(du(e_i),du(A^\alpha(e_j)))\\
			&+F^{\prime}\left(\frac{\left\|u^{*}h\right\|^{2}}{4}\right)\sum h((\nabla_{e_k}du)((e_i)),du(e_j))h(du(e_i),(\nabla_{e_k}du)((e_j))),
		\end{split}
	\end{equation}
	and
	\begin{equation}\label{k11}
		\begin{split}
			J_4=&F^{\prime}\left(\frac{\left\|u^{*}h\right\|^{2}}{4}\right) \sum_{i, j=1}^{m}\sum_D h\left(\widetilde{\nabla}_{e_i} du(V_D), \mathrm{~d} u\left(e_{j}\right)\right) h\left(\widetilde{\nabla}_{e_j} du(V_D), \mathrm{~d} u\left(e_{i}\right)\right)\\
			=&F^{\prime}\left(\frac{\left\|u^{*}h\right\|^{2}}{4}\right)\sum h(du(A^\alpha A^\alpha(e_i)),du(e_j))h(du(e_i),du((e_j)))\\
			&+F^{\prime}\left(\frac{\left\|u^{*}h\right\|^{2}}{4}\right)\sum h((\nabla_{e_k}du)((e_i)),du(e_j))h((\nabla_{e_k}du)((e_i)),du(e_j)),
		\end{split}
	\end{equation}
and

	\begin{equation}\label{k13}
		\begin{split}
			J_5=&\frac{m-4}{2} F^{\prime}\left(\frac{\left\|u^{*}h\right\|{ }^{2}}{4}\right) \sum_{i=1}^{m} h\left(\widetilde{\nabla}_{e_i} du(V_D), \mathrm{~d} u\left(e_{i}\right)\right)  	\sum_{j=1}^{m} h\left(\tilde{\nabla}_{e_j} du(V_D), \mathrm{~d} u\left(e_{j}\right)\right) 
			\\
			=&\frac{m-4}{2} F^{\prime}\left(\frac{\left\|u^{*}h\right\|{ }^{2}}{4}\right)\sum h(du(A^\alpha(e_i)),du(e_i))h(du(e_j),du(A^\alpha(e_j)))\\
			&+\frac{m-4}{2} F^{\prime}\left(\frac{\left\|u^{*}h\right\|{ }^{2}}{4}\right)\sum h((\nabla_{e_k}du)((e_i)),du(e_i))h((\nabla_{e_k}du)((e_j)),du(e_j)),
		\end{split}
	\end{equation}
and

	\begin{equation}\label{k11}
		\begin{split}
			J_6=&F^{\prime}\left(\frac{\left\|u^{*}h\right\|^{2}}{4}\right) \sum_{i, j=1}^{m} h\left(du(Ric^M(e_i)), \mathrm{~d} u\left(e_{j}\right)\right)  	 h\left(\mathrm{~d} u\left(e_{i}\right), \mathrm{d} u\left(e_{j}\right)\right)\\
			=& F^{\prime}\left(\frac{\left\|u^{*}h\right\|^{2}}{4}\right) \sum_{i, j=1}^{m} h\left(du(\mathrm{trace}(A^\alpha)A^\alpha(e_i)), \mathrm{~d} u\left(e_{j}\right)\right)   h\left(\mathrm{~d} u\left(e_{i}\right), \mathrm{d} u\left(e_{j}\right)\right)\\
			&-F^{\prime}\left(\frac{\left\|u^{*}h\right\|^{2}}{4}\right) \sum_{i, j=1}^{m} h\left(du(A^\alpha A^\alpha(e_i)), \mathrm{~d} u\left(e_{j}\right)\right)  	 h\left(\mathrm{~d} u\left(e_{i}\right), \mathrm{d} u\left(e_{j}\right)\right).
		\end{split}
	\end{equation}

	The last term $ J_7 $ can help us to cancel several terms including $ \nabla du $ in $ J_1$-$J_6 $,	
	\begin{equation}\label{k11}
		\begin{split}
			J_7=& F^{\prime}\left(\frac{\left\|S_{u}\right\|{ }^{2}}{4}\right)\sum_{i, j} h\left(\left(\nabla^{2} d u\right)\left(e_{i}\right), d u\left(e_{j}\right)\right)h\left(d u\left(e_{i}\right), d u\left(e_{j}\right)\right)\\
			=&\sum_{i, j, k} e_{k}\left( F^{\prime}\left(\frac{\left\|S_{u}\right\|{ }^{2}}{4}\right)h\left(\nabla_{e_{k}} d u\left(e_{i}\right),  du\left(e_{j}\right)\right)h\left(d u\left(e_{i}\right), d u\left(e_{j}\right)\right)\right)\\
			&- F^{\prime}\left(\frac{\left\|S_{u}\right\|{ }^{2}}{4}\right)\sum_{i, j, k} h\left(\left(\nabla_{e_{k}} d u\right)\left(e_{i}\right),\left(\nabla_{e_{k}} d u\right)\left(e_{j}\right)\right)h\left(d u\left(e_{i}\right), d u\left(e_{j}\right)\right)\\
			&- F^{\prime}\left(\frac{\left\|S_{u}\right\|{ }^{2}}{4}\right)\sum_{i, j, k} h\left(\left(\nabla_{e_{k}} d u\right)\left(e_{i}\right), d u\left(e_{j}\right)\right) h\left(d u\left(e_{i}\right),\left(\nabla_{e_{k}} d u\right)\left(e_{j}\right)\right)\\
			&- F^{\prime}\left(\frac{\left\|S_{u}\right\|{ }^{2}}{4}\right)\sum_{i, j, k} h\left(\left(\nabla_{e_{k}} d u\right)\left(e_{i}\right), d u\left(e_{j}\right)\right) h\left(\left(\nabla_{e_{k}} d u\right)\left(e_{i}\right), d u\left(e_{j}\right)\right)	\\	
			&- F^{\prime}\left(\frac{\left\|S_{u}\right\|{ }^{2}}{4}\right)\frac{m-4}{2} h\left(\left(\nabla_{e_{k}} d u\right)\left(e_{i}\right), d u\left(e_{i}\right)\right) h\left(\left(\nabla_{e_{k}} d u\right)\left(e_{j}\right), d u\left(e_{j}\right)\right)\\
			&-F^{\prime \prime}\left(\frac{\left\|u^{*}h\right\|^{2}}{4}\right)\sum_{l}\left[\sum_{i} h\left(\left(\nabla_{e_{l}} d u\right)\left(e_{i}\right), \sigma_{u}\left(e_{i}\right)\right)\right]^{2},
		\end{split}
	\end{equation}
	where we have used the formula 
	\begin{equation}\label{k14}
		\begin{split}
			e_kF^{\prime}\left(\frac{\left\|u^{*}h\right\|^{2}}{4}\right)=F^{\prime \prime}\left(\frac{\left\|u^{*}h\right\|^{2}}{4}\right) h\left(\left(\nabla_{e_{k}} d u\right)\left(e_{i}\right), \sigma_{u}\left(e_{i}\right)\right).
		\end{split}
	\end{equation}	
Combing \eqref{k9}	\eqref{k10}\eqref{k11}\eqref{k13}\eqref{k14}\eqref{k15}, we get 	
	\begin{equation}\label{k15}
		\begin{split}
			&\sum_{D} I\left(d u\left(V_{D}\right), d u\left(V_{D}\right)\right)\\
			=&F^{\prime \prime}\left(\frac{\left\|u^{*}h\right\|^{2}}{4}\right) \sum_{A} B_{ik}^{\alpha} B_{jl}^{\alpha}\left[\sum_{i} h\left(d u\left(e_{i}\right), \sigma_{u}\left(e_{k}\right)\right)\right] \left[\sum_{i} h\left(d u\left(e_{j}\right), \sigma_{u}\left(e_{l}\right)\right)\right] \\
			&+\int_{M}F^{\prime}\left(\frac{\left\|u^{*}h\right\|^{2}}{4}\right) \sum_{i, j, \alpha} h\left(d u\left(A^{\alpha}\left(e_{i}\right)\right), d u\left(A^{\alpha}\left(e_{j}\right)\right)\right) h\left(d u\left(e_{i}\right), d u\left(e_{j}\right)\right)d v_{g} \\
			&+\int_{M}F^{\prime}\left(\frac{\left\|u^{*}h\right\|^{2}}{4}\right) \sum_{i, j, \alpha} h\left(d u\left(A^{\alpha}\left(e_{i}\right)\right), d u\left(e_{j}\right)\right) h\left(d u\left(e_{i}\right), d u\left(A^{\alpha}\left(e_{j}\right)\right)\right) d v_{g} \\
			&+\int_{M}F^{\prime}\left(\frac{\left\|u^{*}h\right\|^{2}}{4}\right) \sum_{i, j, \alpha} h\left(d u\left(A^{\alpha} A^{\alpha}\left(e_{i}\right), d u\left(e_{j}\right)\right) h\left(d u\left(e_{i}\right), d u\left(e_{j}\right)\right) d v_{g}\right. \\
			&+\frac{m-4}{2} \int_{M} F^{\prime}\left(\frac{\left\|u^{*}h\right\|^{2}}{4}\right)\sum_{i, j, \alpha} h\left(d u\left(A^{\alpha}\left(e_{i}\right)\right), d u\left(e_{i}\right)\right) h\left(d u\left(A^{\alpha}\left(e_{j}\right)\right), d u\left(e_{j}\right)\right) d v_{g} \\
			-&\int_{M}F^{\prime}\left(\frac{\left\|u^{*}h\right\|^{2}}{4}\right) \sum_{i, j, \alpha} h\left(d u\left(\operatorname{trace}\left(A^{\alpha}\right) A^{\alpha}\left(e_{i}\right)\right), d u\left(e_{j}\right)\right) h\left(d u\left(e_{i}\right), d u\left(e_{j}\right)\right) d v_{g} \\
		&+\int_{M}F^{\prime}\left(\frac{\left\|u^{*}h\right\|^{2}}{4}\right) \sum_{i, j, \alpha} h\left(d u\left(A^{\alpha} A^{\alpha}\left(e_{i}\right)\right), d u\left(e_{j}\right)\right)h\left(d u\left(e_{i}\right), d u\left(e_{j}\right)\right)d v_{g}.
		\end{split}
	\end{equation}

	Choosing orthornormal frame $\{e_1,e_2,\cdots,e_n\}  $ such that $ h(du(e_i),du(e_j))=\lambda_i^2 \delta_{ij} $, we get 
	\begin{equation}\label{870}
		\begin{split}
		F^{\prime \prime}\left(\frac{\left\|u^{*}h\right\|^{2}}{4}\right) \sum B_{ik}^{\alpha} B_{jl}^{\alpha}\left[\sum_{i} h\left(d u\left(e_{i}\right), \sigma_{u}\left(e_{k}\right)\right)\right] \left[\sum_{i} h\left(d u\left(e_{j}\right), \sigma_{u}\left(e_{l}\right)\right)\right] \\
			=F^{\prime \prime}\left(\frac{\left\|u^{*}h\right\|^{2}}{4}\right) \sum\left \langle B(e_i,e_i) ,B(e_j,e_j) \right \rangle \lambda_i^4\lambda_j^4.
		\end{split}
	\end{equation}

	So, by \eqref{k15}\eqref{870} and (50)-(58)  in \cite{2021The}, we get 	
	\begin{equation*}
		\begin{split}
			&\sum_{D} I\left(d u\left(V_{D}\right), d u\left(V_{D}\right)\right) \\
			\leq&  \int_{M}  F^{\prime\prime}\left(\frac{\left\|u^{*}h\right\|^{2}}{4}\right)\left \langle B(e_i,e_i) ,B(e_j,e_j) \right \rangle \lambda_i^4\lambda_j^4 \d v_g \\
			&+\int_{M}  F^{\prime}\left(\frac{\left\|u^{*}h\right\|^{2}}{4}\right)\sum_{i} \lambda_{i}^{4}\sum_{j}\left(4\left\langle B\left(e_{i}, e_{j}\right), B\left(e_{i}, e_{j}\right)\right\rangle-\left\langle B\left(e_{i}, e_{i}\right), B\left(e_{j}, e_{j}\right)\right\rangle\right)\d v_g\\
				\leq&\frac{1}{2}  \int_{M}  F^{\prime}\left(\frac{\left\|u^{*}h\right\|^{2}}{4}\right)\sum_{i} \lambda_{i}^{4} \sum_{j}\big(4\left\langle B\left(e_{i}, e_{j}\right), B\left(e_{i}, e_{j}\right)\right\rangle-\left\langle B\left(e_{i}, e_{i}\right), B\left(e_{j}, e_{j}\right)\right\rangle\big)\d v_g
		\end{split}
	\end{equation*}
	
\end{proof}
From the above proof ,we can see that 
		\begin{thm}\label{thmc}
		Let $ (M^m, g) $ be a compact $ \Phi
		$-SSU manifold and $ F^{\prime\prime} \leq 0 $, $\nabla^2 H$ is semipositive. Then every  $ \Phi_{sym} $-stable $ F $-symphonic map $u$ from $(M^m, g)$ with potential into any Riemannian manifold $ N $ is constant.
	\end{thm}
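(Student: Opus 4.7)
The argument tracks the proof of Theorem \ref{thm2.1} almost verbatim; only the two places where the hypotheses on $F$ and $H$ enter the estimate need adjustment, and both modifications go in our favour.

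\emph{Averaging test fields.} First, I would realize $(M^m,g)$ isometrically in $\mathbb{R}^{m+p}$ so as to witness the $\Phi$-SSU property, fix an orthonormal basis $\{E_D\}_{D=1}^{m+p}$ of $\mathbb{R}^{m+p}$, and set $V_D=\sum_i \langle E_D,e_i\rangle e_i$ at each point of $M$. Apply \eqref{second} with $V=W=du(V_D)$ and sum over $D$, producing the same seven summands $J_1,\dots,J_7$ as in the proof of Theorem \ref{thm2.1}. The Weitzenbock identity together with integration by parts against the Euler--Lagrange equation $\delta^{\nabla}\!\bigl(F'(\tfrac{\|u^*h\|^2}{4})\,\sigma_u\bigr)=\nabla H(u)$ would be reused unchanged to combine the $\int_M \mathrm{Hess}\,H(du(V_D),du(V_D))$ contribution with $\int_M F'\langle(\Delta du)(V_D),\sigma_u(V_D)\rangle$; the hypothesis $\nabla^2 H\ge 0$ is what ensures that, after this rearrangement, any residual Hess\,$H$ piece carried through to the bound enters with the correct (nonpositive) sign, playing the same structural role that the dual hypothesis $\nabla^2 H\le 0$ played in Theorem \ref{thm2.1}.

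\emph{Assembly and conclusion.} Collecting the terms exactly as in the preceding proof and diagonalizing $h(du(e_i),du(e_j))=\lambda_i^2\delta_{ij}$ at each point, identity \eqref{870} and the bookkeeping \eqref{k15} deliver the intermediate estimate
\[
\sum_{D=1}^{m+p} I(du(V_D),du(V_D))\le \int_M F^{\prime\prime}\!\left(\tfrac{\|u^*h\|^2}{4}\right)\!\Bigl|\sum_i \lambda_i^4\,B(e_i,e_i)\Bigr|^2 dv_g + \int_M F'\!\left(\tfrac{\|u^*h\|^2}{4}\right) \sum_i \lambda_i^4\,\langle Q^M(e_i),e_i\rangle\, dv_g.
\]
The key departure from Theorem \ref{thm2.1} is in controlling the $F^{\prime\prime}$ integrand: rather than invoking the growth inequality $tF^{\prime\prime}(t)<aF'(t)$ to absorb it into half of the $\Phi$-SSU term, I would simply note that $F^{\prime\prime}\le 0$ while $\bigl|\sum_i\lambda_i^4 B(e_i,e_i)\bigr|^2\ge 0$, so the first integral is automatically nonpositive. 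The second integral is strictly negative wherever $du\ne 0$ by the $\Phi$-SSU condition $\langle Q^M(X),X\rangle<0$. Hence if $u$ were non-constant, $\sum_D I(du(V_D),du(V_D))<0$, contradicting $\Phi_{sym}$-stability; so $du\equiv 0$ and $u$ is constant.

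The main obstacle is purely bookkeeping: verifying that the sign produced by the Weitzenbock-plus-Euler--Lagrange manipulation on the Hess\,$H$ term reverses relative to Theorem \ref{thm2.1} in exactly the way that the flipped hypothesis $\nabla^2 H\ge 0$ (in place of $\nabla^2 H\le 0$) demands. Once that is pinned down, the replacement of the delicate growth condition on $F$ by the crude pointwise inequality $F^{\prime\prime}\le 0$ makes the conclusion immediate, and the theorem drops out as a corollary of the estimate already assembled in the proof of Theorem \ref{thm2.1}.
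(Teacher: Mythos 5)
Your proposal matches the paper's own route: Theorem \ref{thmc} is stated there as an immediate consequence of the computation in the preceding proof, and the intended point is exactly the one you make, namely that $\sum_{i,j}\langle B(e_i,e_i),B(e_j,e_j)\rangle\lambda_i^4\lambda_j^4=\bigl|\sum_i\lambda_i^4B(e_i,e_i)\bigr|^2\ge 0$, so $F''\le 0$ renders the $F''$-term nonpositive without invoking the growth condition of Lemma \ref{dkl}, while the $\Phi$-SSU inequality makes the $F'$-term strictly negative wherever $du\ne 0$. Your residual worry about how the sign of $\operatorname{Hess}H$ enters is legitimate but is a defect of the paper's own bookkeeping (which absorbs that term into the Weitzenb\"ock--Euler--Lagrange cancellation without displaying it again), not a gap specific to your argument.
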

	\subsection{ $F$-symphonic map with potential  into $ \Phi
		$-SSU manifold  }
	
	\begin{thm}\label{thm2.1}
		Let $ (M^m, g) $ be a compact $ \Phi
		$-$ SSU  $ manifold. For  $x \in M$
		$$ a=\min _{X \in U M, Y \in U M} \frac{-\left\langle Q_{x}^{M}(X), X\right\rangle_{M}}{8|B(X, X)|_{\mathbf{R}^{r}}|B(Y, Y)|_{\mathbf{R}^{r}}}>0,$$  Let    $ F $ be the positive  function determined by Lemma \ref{dkl}, $ \nabla^2 H \leq 0 $.
		Let $ u $ be  stable $ \Phi_{sym} $-
		harmonic map with potential  from $ (N, g)  $ into any Riemannian manifold $(M,g)$, then $ u $ is constant.
	\end{thm}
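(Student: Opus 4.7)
The plan is to mirror the proof of the preceding theorem, the ``from $\Phi$-SSU'' case, but now using the isometric immersion of the \emph{target} manifold $M$ into Euclidean space. By hypothesis there is an isometric immersion $\iota: M \hookrightarrow \mathbb{R}^{m+p}$ whose second fundamental form $B$ satisfies the pointwise $\Phi$-SSU inequality. Fix a global orthonormal basis $\{E_D\}_{D=1}^{m+p}$ of $\mathbb{R}^{m+p}$ and, for the map $u:N\to M$, introduce the test sections $V_D = (E_D\circ u)^{\top} \in \Gamma(u^{-1}TM)$. The identity $\sum_D E_D\otimes E_D = \mathrm{Id}$ yields $\sum_D \langle V_D, X\rangle \langle V_D, Y\rangle = \langle X, Y\rangle$ for all $X,Y$ tangent to $M$, so summing $I(V_D,V_D)$ over $D$ collapses the $V_D$'s into pointwise traces over $T_{u(x)}M$.

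Substituting into the second variation formula \eqref{second}, the Gauss--Weingarten decomposition of the parallel ambient field $E_D$ gives $\tilde{\nabla}_{e_i} V_D = A^{(E_D)^{\perp}}\!\bigl(du(e_i)\bigr)$, where $A$ is the shape operator of $\iota$. After summing over $D$, each of the five $F'$-type terms in \eqref{second} decomposes into a piece quadratic in the $B_{ij}^{\alpha}$'s and a piece quadratic in $\nabla\,du$. The target curvature term $\int F' \sum_{i,j} h\bigl(R^M(V_D, du(e_i))V_D, du(e_j)\bigr)\,h(du(e_i), du(e_j))$ is converted, via the Gauss equation for $M\hookrightarrow\mathbb{R}^{m+p}$, into further $B$-type quadratics. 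Choosing an orthonormal frame $\{e_i\}$ at $x$ that diagonalizes $u^{*}h$ as $h(du(e_i), du(e_j)) = \lambda_i^2 \delta_{ij}$, the sum of the $F'$-terms collects into
\[
\tfrac{1}{2}\int_N F'\!\left(\tfrac{\|u^{*}h\|^{2}}{4}\right) \sum_{i,j} \lambda_i^{4}\,\bigl(4\langle B(e_i,e_j),B(e_i,e_j)\rangle - \langle B(e_i,e_i),B(e_j,e_j)\rangle\bigr)\,\d v_g,
\]
which by the $\Phi$-SSU condition on $M$ is strictly negative wherever $du\neq 0$. The assumption $\nabla^{2}H \leq 0$ contributes nonpositively, while the $(\nabla\,du)$-type pieces will either telescope into a divergence (annihilating after integration) or combine into a manifestly nonpositive quantity, exactly as in the Weitzenb\"ock-style reorganization already used in the preceding theorem.

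The remaining obstacle is the $F''$-term $F''\langle \tilde{\nabla}V_D, \sigma_u\rangle^{2}$, which is the only summand with an unfavorable sign. Here Lemma \ref{dkl} enters: choosing $F$ so that $tF''(t) < aF'(t)$ with $a$ equal to the constant in the statement, the Cauchy--Schwarz estimate bounds this bad term by $aF'$ times a quantity of the form $\lambda_i^{4}|B(e_i,e_i)|\,\lambda_j^{4}|B(e_j,e_j)|$; by the definition of $a$ this is strictly dominated by the favorable $F'$ integral above. Consequently $\sum_D I(V_D,V_D) \le 0$, with strict inequality wherever $du\neq 0$. Stability then forces $du\equiv 0$, so $u$ is constant.

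The main difficulty is not conceptual but combinatorial: the five $F'$-terms of \eqref{second}, after substitution of $\tilde\nabla_{e_i} V_D = A^{(E_D)^\perp}(du(e_i))$ and use of the Gauss equation, produce a long list of contractions of $B$ with $du$ that must be matched against the $\Phi$-SSU expression $\langle Q^{M}(X), X\rangle$. Unlike the ``from $\Phi$-SSU'' case, where the Weitzenb\"ock formula supplied the required curvature--connection identity, here the Gauss equation for the target plays that role; the challenge is the correct bookkeeping of indices so that the $\nabla\,du$ cross-terms cancel and the $B$-type terms assemble cleanly into $\langle Q^{M}, \cdot\rangle$.
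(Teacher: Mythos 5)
Your proposal follows essentially the same route as the paper: build the test sections $V_D=(E_D\circ u)^{\top}$ from the Euclidean embedding of the target, sum the index form \eqref{second} over $D$, diagonalize $u^{*}h$ as $h(du(e_b),du(e_c))=\lambda_b^2\delta_{bc}$, convert the curvature term by the Gauss equation, and let the $\Phi$-SSU inequality together with Lemma \ref{dkl} (which controls the $F''$-term via the constant $a$) force $\sum_D I(V_D,V_D)<0$ for nonconstant $u$. The one simplification you overlook is that, since $E_D$ is parallel in the ambient space, $\tilde{\nabla}_{e_i}V_D=A^{(E_D)^{\perp}}(du(e_i))$ is purely algebraic in $du$, so the $\nabla du$ cross-terms and the divergence/Weitzenb\"ock-style cancellation you anticipate simply do not arise in this ``into'' direction; the computation is strictly easier than in the ``from'' case.
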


\begin{proof}
	We use the same notations  as in the proof of Theorem 7.1 in \cite{2021The}.
 Let   $\left\{e_{1}, \cdots, e_{m}\right\}$ be a local orthonormal frame field of $M$ . Let   $\left\{\epsilon_{1}, \cdots, \epsilon_{n}, \epsilon_{n+1}, \cdots, \epsilon_{n+p}\right\}$ be an orthonormal frame field of $R^{n+p}$, such that $\left\{\epsilon_{i}, \cdots, \epsilon_{n}\right\}$ are tangent to $N^{n}, \epsilon_{n+1}, \cdots, \epsilon_{n+p}$ are normal to $N^{n}$ and $\left.{}^{N}\nabla_{\epsilon_{b}} \epsilon_{c}\right|_{u(x)}=0$, where $x$ is a fixed point of $M$. As  in \cite{2021The}, we fix an orthonormal basis $E_{D}$ of $R^{m+p}$, for $D=1, \cdots, m+p$ and set
\begin{equation*}
	\begin{split}
		V_{D}&=\sum_{b=1}^{n} v_{D}^{b} \epsilon_{a},  v_{D}^{b}=\left\langle E_{D}, \epsilon_{b}\right\rangle,\\
		v_{D}^{\alpha}&=\left\langle E_{D}, \epsilon_{\alpha}\right\rangle, \quad   for \quad  \alpha=n+1, \cdots, n+p,\\
		\nabla_{\epsilon_{b}} V_{D}&=\sum_{\alpha=n+1}^{n+p} \sum_{c=1}^{n} v_{D}^{\alpha} B_{b c}^{\alpha} \epsilon_{c}, 1 \leq b \leq n;
	\end{split}
\end{equation*}

choose local frame such that 
\begin{equation}\label{kkk}
	\begin{split}
		\sum_{i=1}^{m}u_{i}^{b}u_{i}^{c}=\lambda_b^2 \delta_{bc}.
	\end{split}
\end{equation}

Let $ I(V_D,V_D) $ be the index form in  \eqref{second}. 
\begin{equation*}
	\begin{split}
		I(V_D,V_D)=\sum_{i=0}^{7}J_i
	\end{split}
\end{equation*}
	Inspired by the formula (62)-(66) in \cite{2021The}, similar to \eqref{k9}, using (59) in \cite{2021The},   we have 
	
	By \eqref{kkk}, a direct computation gives  
	\begin{equation*}
		\begin{split}
			&F^{\prime \prime}\left(\frac{\left\|u^{*}h\right\|^{2}}{4}\right) \sum_{A}\left\langle\tilde{\nabla} \left(V_{D}\right), \sigma_{u}\right\rangle^{2} \\
			=&F^{\prime \prime}\left(\frac{\left\|u^{*}h\right\|^{2}}{4}\right)\sum \left \langle B(\epsilon_b,\epsilon_b), B(\epsilon_c, \epsilon_c) \right \rangle \lambda_b^4\lambda_c^4.
		\end{split}
	\end{equation*}
	
Using the formula (62)-(66) in \cite{2021The}, we can deal with the terms $ J_2,J_3,\cdots,J_7 $ as in the proof of Theorem \ref{thmc}. We have 
	
	\begin{equation*}
		\begin{split}
			&\sum_{D} I\left(V_{D}, V_{D}\right)\\
			\leq &\frac{1}{2}\int_{M}F^{\prime}\left(\frac{\left\|u^{*}h\right\|^{2}}{4}\right) \sum_{b} \lambda_{b}^{4} \sum_{c}\left(4\left\langle B\left(\epsilon_{b}, \epsilon_{c}\right), B\left(\epsilon_{b}, \epsilon_{c}\right)\right\rangle-\left\langle B\left(\epsilon_{b}, \epsilon_{b}\right), B\left(\epsilon_{c}, \epsilon_{c}\right)\right\rangle\right) d v_{g}.
		\end{split}
	\end{equation*}	
	Since $N$ is a $\Phi$-SSU manifold, if $u$ is not constant, we have
	\[
	\sum_{D} I\left(V_{D}, V_{D}\right)<0.
	\]
\end{proof}
From the above proof, we can get 
	\begin{thm}
	Let $ (M^m, g) $ be a compact $\Phi$-SSU manifold and $ F^{\prime\prime}\leq 0$. Then every $\Phi_{sym}$ stable $F$-symphonic map with potential $ H $  from $ N  $  into any Riemannian manifold $ M $ must be  constant.
\end{thm}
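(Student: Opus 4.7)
The plan is to follow the proof of the preceding theorem (the one establishing that every $\Phi_{sym}$-stable $F$-symphonic map from $N$ into the $\Phi$-SSU manifold $(M,g)$ is constant) line by line, and to show that the role played there by Lemma \ref{dkl}, which produced an $F$ satisfying the pointwise bound $tF''(t)<a\,F'(t)$ for a specific constant $a$, is in fact played automatically once one assumes $F''\le 0$. So I would fix an isometric immersion of $N^n$ in $\mathbb{R}^{n+p}$, choose a fixed orthonormal basis $\{E_D\}_{D=1}^{n+p}$, and define the test vector fields $V_D=\sum_b v_D^b\epsilon_b$ along $u$ exactly as in the previous proof, using the local frame $\{\epsilon_b\}$ tangent to $N$ and $\{\epsilon_\alpha\}$ normal to $N$ together with the diagonalizing frame $\{e_i\}$ on $M$ with $\sum_i u_i^b u_i^c=\lambda_b^2\delta_{bc}$.

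Next I would substitute these $V_D$ into the second variation formula \eqref{second} and sum on $D$ to produce the seven pieces $J_1,\dots,J_7$ analyzed in the preceding proof. The computations of $J_2,\dots,J_7$ are insensitive to the sign of $F''$, since they use only the formulas (62)--(66) of \cite{2021The} together with $F'>0$ and the relation $\sum_{D}v_D^bv_D^c=\delta_{bc}$; those steps therefore transfer without change. The only term needing attention is
\[
J_1=F''\!\left(\tfrac{\|u^*h\|^2}{4}\right)\sum_{b,c}\bigl\langle B(\epsilon_b,\epsilon_b),B(\epsilon_c,\epsilon_c)\bigr\rangle\lambda_b^4\lambda_c^4,
\]
and here I would observe that the coefficient $\sum_{b,c}\langle B(\epsilon_b,\epsilon_b),B(\epsilon_c,\epsilon_c)\rangle\lambda_b^4\lambda_c^4=\bigl|\sum_b\lambda_b^4 B(\epsilon_b,\epsilon_b)\bigr|^2\ge 0$, so the assumption $F''\le 0$ forces $J_1\le 0$. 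This is precisely what, in the previous theorem, required the delicate choice of $F$ from Lemma \ref{dkl}; under $F''\le 0$ the same bound comes for free.

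Combining these observations, I obtain
\[
\sum_{D} I(V_D,V_D)\le \tfrac{1}{2}\int_M F'\!\left(\tfrac{\|u^*h\|^2}{4}\right)\sum_{b}\lambda_b^4\sum_{c}\Bigl(4\langle B(\epsilon_b,\epsilon_c),B(\epsilon_b,\epsilon_c)\rangle-\langle B(\epsilon_b,\epsilon_b),B(\epsilon_c,\epsilon_c)\rangle\Bigr)dv_g,
\]
after, as in the earlier argument, handling the Hessian-of-$H$ term via the semipositivity hypothesis on $\nabla^2 H$ that must be carried along (I would make this hypothesis explicit, as was done for Theorem \ref{thmc} in the $\Phi$-SSU-domain case). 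The $\Phi$-SSU condition on $(M,g)$ shows the inner bracketed expression is strictly negative at every point where some $\lambda_b\ne 0$, hence $\sum_D I(V_D,V_D)<0$ whenever $u$ is non-constant, contradicting $\Phi_{sym}$-stability.

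The main obstacle I expect is bookkeeping rather than a genuine analytic difficulty: one must verify that in the target-valued analogue, the contraction identity $\sum_D\langle B(\epsilon_b,\epsilon_b),B(\epsilon_c,\epsilon_c)\rangle\lambda_b^4\lambda_c^4$ arising from $J_1$ really is a sum of squares (as opposed to an indefinite quadratic form in the $\lambda_b^2$), and that the cross terms coming from $J_2$--$J_7$ assemble exactly into the $\Phi$-SSU tensor $\langle Q^M_x(X),X\rangle_M$ along the pulled-back frame $\{\epsilon_b\}$. Both points are implicit in the preceding proof and in (59)--(66) of \cite{2021The}, but require careful tracking of the index conventions in the target-manifold setting.
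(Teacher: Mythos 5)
Your proposal matches the paper's own treatment: the paper literally derives this theorem by the remark ``from the above proof, we can get,'' i.e.\ by rerunning the preceding $\Phi$-SSU-target argument and noting that the only $F''$-dependent contribution $J_1$ has coefficient $\bigl|\sum_b\lambda_b^4 B(\epsilon_b,\epsilon_b)\bigr|^2\ge 0$, so $F''\le 0$ makes it nonpositive without invoking Lemma \ref{dkl}. You have correctly identified this as the sole point of difference, and your observation that a sign hypothesis on $\nabla^2 H$ must still be carried along (though omitted from the theorem's statement) is a fair and accurate reading of what the paper's argument actually requires.
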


\begin{defn}
	A Riemannian manifold $ M $ is $\Phi$-strongly unstable ($\Phi$-
	SU) if it is neither the domain nor the target of any nonconstant smooth
	$\Phi_{sym}$-stable $F$-symphonic map,
\end{defn}

\begin{cor}
	Let $ (M^m, g) $ be a compact $ \Phi
$-$ SSU  $ manifold. For  $x \in M$,
$$ a=\min _{X \in U M, Y \in U M} \frac{-\left\langle Q_{x}^{M}(X), X\right\rangle_{M}}{8|B(X, X)|_{\mathbf{R}^{r}}|B(Y, Y)|_{\mathbf{R}^{r}}}>0,$$  Let    $ F $ be the positive  function determined by Lemma \ref{dkl}, $ \nabla^2 H \leq 0 $. Then $ M $ is $\Phi$-SU. 
\end{cor}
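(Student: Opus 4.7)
The plan is to observe that the corollary is an immediate packaging of the two main stability theorems of Section \ref{sec4}, and then to verify that each direction of the $\Phi$-SU condition (domain and target) is addressed by the appropriate theorem.

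First I would recall the definition: $M$ is $\Phi$-SU precisely when, for every Riemannian manifold $N$, there is no nonconstant smooth $\Phi_{sym}$-stable $F$-symphonic map $M\to N$, and no nonconstant smooth $\Phi_{sym}$-stable $F$-symphonic map $N\to M$. So the statement naturally splits into two halves that must be discharged separately.

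For the first half (no such map out of $M$), I would apply the first Theorem of Section \ref{sec4} (the $\Phi_{sym}$-stable theorem with $M$ as domain), whose hypotheses are exactly those of the corollary: $M$ compact $\Phi$-SSU, the same constant $a>0$, $F$ chosen via Lemma \ref{dkl} for this $a$, and $\nabla^2 H\leq 0$. That theorem concludes that any $\Phi_{sym}$-stable $F$-symphonic map with potential $u:M\to N$ is constant, which is exactly the domain half.

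For the second half (no such map into $M$), I would apply the second Theorem of Section \ref{sec4} (the one for maps into a $\Phi$-SSU target), again with the same choice of $a$, $F$, and $H$. This gives that every $\Phi_{sym}$-stable $F$-symphonic map $u:N\to M$ with potential is constant, which is the target half.

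Combining the two conclusions yields that $M$ is neither a nontrivial domain nor a nontrivial target, so by definition $M$ is $\Phi$-SU. There is essentially no obstacle here: the corollary is purely a synthesis step, and the only point requiring care is matching the hypotheses (same $a$, same $F$ from Lemma \ref{dkl}, same sign condition on $\nabla^2 H$) to both theorems simultaneously, which the statement already arranges.
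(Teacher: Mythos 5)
Your proposal is correct and matches the paper's (implicit) argument exactly: the corollary is obtained by combining the theorem for $\Phi_{sym}$-stable $F$-symphonic maps with potential \emph{from} a compact $\Phi$-SSU manifold with the theorem for such maps \emph{into} a compact $\Phi$-SSU manifold, and then invoking the definition of $\Phi$-SU. No further justification is needed beyond checking, as you do, that the hypotheses on $a$, $F$, and $\nabla^2 H$ serve both theorems simultaneously.
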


	\subsection{$ F $-symphonic map with  potential when  targeted manifold is $\delta$-pinched}

	In this section, we give Okayasu type theorem(cf. Howard \cite{Howard1985,Howard1986}).
	\begin{thm}
	Let $ u : (M^m,g) \to (N^n,h) $ be  $\Phi_{sym}$-stable conformal $F $-symphonic map with potential $ H $  with conformal factor $ \lambda $  from compact Riemannian manifold $ M $ into a compact simply connected $ \delta $-pinched n-dimensional
		Riemannian manifold $ N$. If one of the following conditions holds, 
		
		(1) there exists a constant  $ c_F<\frac{n-2-2m}{4}\lambda, n>2+2m$ and $\nabla^2 H \leq 0$ ,
		\begin{equation*}
		\begin{split}
			\left(\frac{4}{\lambda}c_{F}+2m+1\right)\left\{\frac{n}{4} k_{3}^{2}(\delta)+k_{3}(\delta)+1\right\}-\frac{2 \delta}{1+\delta}(n-1) <0.
			\end{split}
		\end{equation*}
			
		(2) $ F^{\prime\prime}(x)=F^{\prime}(x) $ and   $\lambda ^2 \leq \frac{n-2-2m}{m}, n>2+2m, $$\nabla^2 H \leq 0$
		\begin{equation*}
			\begin{split}
				\left(\lambda^2 m+2m+1\right)\left\{\frac{n}{4} k_{3}^{2}(\delta)+k_{3}(\delta)+1\right\}-\frac{2 \delta}{1+\delta}(n-1) <0.
			\end{split}
		\end{equation*}	
		Then $ u $ must be   a constant.

	\end{thm}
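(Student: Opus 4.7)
The plan is to mimic the cross-section argument used in the proof of Theorem~\ref{thm2.3}, replacing the $F$-harmonic second variation by the $F$-symphonic one in \eqref{second} and exploiting the conformality of $u$ to reduce every multilinear expression to scalar multiples of $|\tilde\nabla V|^2$, $\langle\tilde\nabla V, du\rangle^2$, and the standard curvature contraction.

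First, I would rewrite the six terms on the right hand side of \eqref{second} under the conformal hypothesis $h(du(e_i),du(e_j))=\lambda^{2}\delta_{ij}$ in a local $g$-orthonormal frame, which gives $\sigma_{u}(\cdot)=\lambda^{2}du(\cdot)$, $|u^{*}h|^{2}/4=m\lambda^{4}/4$, and $|du|^{2}=m\lambda^{2}$. The $F''$ term becomes $\lambda^{4}F''(m\lambda^{4}/4)\langle\tilde\nabla V,du\rangle^{2}$, the four $F'$ terms involving $\tilde\nabla V$ and $du$ (together with the diagonal trace) combine into $\lambda^{2}F'(m\lambda^{4}/4)$ times a quadratic expression controlled by a fixed multiple of $|\tilde\nabla V|^{2}$, and the curvature term collapses to $\lambda^{2}F'(m\lambda^{4}/4)\sum_{i}h(R^{N}(V,du(e_{i}))V,du(e_{i}))$. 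The Hessian term is discarded in the upper bound via the assumption $\nabla^{2}H\leq 0$.

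Next, I would introduce the extended bundle $E=TN\oplus\epsilon(N)$ with the metric connection $\nabla''$ of \eqref{ccfd} and its flat approximant $\nabla'$ satisfying $\|\nabla'-\nabla''\|\leq\tfrac{1}{2}k_{3}(\delta)$, exactly as in Theorem~\ref{thm2.3}. Let $\mathcal{W}=\{W\in\Gamma(E):\nabla' W=0\}\cong\mathbb{R}^{n+1}$ and pick an orthonormal basis $\{W_{D}\}_{D=1}^{n+1}$ with $W_{1},\ldots,W_{n}$ tangent to $N$ at a chosen base point. Substituting $V=W_{D}^{T}$, expanding $\tilde\nabla_{e_{i}}W_{D}^{T}=(\nabla''_{u_{*}e_{i}}W_{D})^{T}-\langle W_{D},e\rangle u_{*}e_{i}$ as in \eqref{cjk}, and summing over $D$, the second-fundamental-form pieces combine via the estimate $\sum_{D}|(\nabla''_{X}W_{D})^{T}|^{2}\leq\tfrac{n}{4}k_{3}^{2}(\delta)|X|^{2}$ to produce the factor $\tfrac{n}{4}k_{3}^{2}(\delta)+k_{3}(\delta)+1$, while the $\delta$-pinching bound contributes the curvature gain $\tfrac{2\delta}{1+\delta}(n-1)|du|^{2}$. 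In case (1) the inequality $tF''(t)\leq c_{F}F'(t)$ applied at $t=m\lambda^{4}/4$ absorbs the $F''$ contribution into the coefficient $\tfrac{4c_{F}}{\lambda}$; in case (2) the identity $F''=F'$ yields the direct coefficient $\lambda^{2}m$ without invoking $c_{F}$. Assembling all contributions, $\sum_{D}I(W_{D}^{T},W_{D}^{T})\leq\int_{M}F'(m\lambda^{4}/4)\cdot C\cdot|du|^{2}dv_{g}$, where $C$ is the bracketed quantity in the corresponding hypothesis; the assumed negativity of $C$ forces $du\equiv 0$ by $\Phi_{sym}$-stability, proving that $u$ is constant.

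The main obstacle will be the careful bookkeeping of $\lambda$-powers across the six $F'$-terms of the symphonic second variation: each contraction carries $\sigma_{u}=\lambda^{2}du$ a different number of times, and assembling them correctly is what produces the exact coefficients $\tfrac{4c_{F}}{\lambda}+2m+1$ and $\lambda^{2}m+2m+1$ in the two cases. A secondary subtlety is verifying that the cross term $-2\langle W,e\rangle\langle\nabla''_{u_{*}e_{i}}W^{T},u_{*}e_{i}\rangle$ coming from the decomposition of $\tilde\nabla W^{T}$, after being summed over $D$ and Cauchy--Schwarz'd with the $\tfrac{1}{2}k_{3}(\delta)$ bound on $\nabla'-\nabla''$, contributes exactly the middle $k_{3}(\delta)$ term without introducing an unwanted $\lambda$-dependence. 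Once these are handled, the trace-over-$\mathcal{W}$ strategy of Theorem~\ref{thm2.3} finishes the argument verbatim.
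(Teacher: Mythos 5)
Your proposal follows essentially the same route as the paper's own proof: the extended bundle $E=TN\oplus\epsilon(N)$ with the connections $\nabla''$ and $\nabla'$, the decomposition $\tilde\nabla_{e_i}W^T=(\nabla''_{u_*e_i}W)^T-\langle W,e\rangle u_*e_i$, the use of conformality to reduce the six terms of \eqref{second} to multiples of $|\tilde\nabla W^T|^2$ and the pinched curvature contraction, and the trace over $\mathcal{W}\cong\mathbb{R}^{n+1}$ yielding the coefficient $\frac{n}{4}k_3^2(\delta)+k_3(\delta)+1$. The only caveat is your normalization $h(du(e_i),du(e_j))=\lambda^2\delta_{ij}$ versus the paper's $|du|^2=\lambda m$, which shifts powers of $\lambda$ in the final coefficients but not the structure of the argument.
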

\begin{proof} 
	As in the proof of Theorem \ref{thm2.3}.
	As in \cite{MR2259738}, we assume the sectional curvature of N is equal to $ \frac{2\delta}{1+\delta} .$ Let  the vector  bundle $ E=TN\oplus \epsilon(N) ,$ here, $ \epsilon(N) $ is  the trivial bundle. As in\cite{ara2001stability} \cite{MR2259738}, we can define a   metric connection  $ \nabla^{\prime\prime} $ as follows:
	\begin{equation}\label{}
		\begin{split}
		&\nabla^{\prime\prime}_XY={}^{N}\nabla_XY-h(X,Y)e;\\
			&\nabla^{\prime\prime}_Xe=X,	
		\end{split}
	\end{equation}
We cite the functions in \cite{ara2001stability} or \cite{MR2259738}, 
\begin{equation*}
	\begin{split}
		k_{1}(\delta)=\frac{4(1-\delta)}{3 \delta}\left[1+\left(\sqrt{\delta} \sin \frac{1}{2} \pi \sqrt{\delta}\right)^{-1}\right],
		k_{2}(\delta)=\left[\frac{1}{2}(1+\delta)\right]^{-1}  k_{1}(\delta). \\
	\end{split}
\end{equation*}	
By \cite{ara2001stability} \cite{MR2259738}, we know that  there exists a flat connection $ \nabla^{\prime} $ such that 
\begin{equation*}
	\begin{split}
		\|\nabla^{\prime}-\nabla^{\prime\prime}\|\leq \frac{1}{2}k_3(\delta),
	\end{split}
\end{equation*}
where $ k_3(\delta) $ is defined as 
\begin{equation*}
	\begin{split}
				k_{3}(\delta)=k_{2}(\delta) \sqrt{1+\left(1-\frac{1}{24} \pi^{2}\left(k_{1}(\delta)\right)^{2}\right)^{-2}} .
	\end{split}
\end{equation*}

Along the line in \cite{ara2001stability}, take a cross section $ W  $ of $ E $ and let $ W^T $ denotes the $ TN $ component of $ W $ , by the second variation formula \eqref{second} for $ F $-sympohic map with potential, .
\begin{equation}\label{ckj}
	\begin{split}
		I(W^T,W^T)=&\int_{M} \operatorname{HessH}(W^T, W^T) d v_{g} \\
		&+\int_{M} F^{\prime \prime}\left(\frac{\left\|u^{*} h\right\|^{2}}{4}\right)\left\langle \widetilde{\nabla} W^T, \sigma_{u}\right\rangle \left\langle \widetilde{\nabla} W^T, \sigma_{u}\right\rangle d v_{g} \\
		&	+\int_{M} F^{\prime}\left(\frac{\left\|u^{*} h\right\|^{2}}{4}\right) \sum_{i, j=1}^{m} h\left(\widetilde{\nabla}_{e_{i}} W^T, \widetilde{\nabla}_{e_{j}} W^T\right) h\left(d u\left(e_{i}\right), d u\left(e_{j}\right)\right) d v_{g} \\
		&	+\int_{M} F^{\prime}\left(\frac{\left\|u^{*} h\right\|^{2}}{4}\right) \sum_{i, j=1}^{m} h\left(\widetilde{\nabla}_{e_{i}} W^T, d u\left(e_{j}\right)\right) h\left(\widetilde{\nabla}_{e_{i}} W^T, d u\left(e_{j}\right)\right) d v_{g} \\
		&	+\int_{M} F^{\prime}\left(\frac{\left\|u^{*} h\right\|^{2}}{4}\right) \sum_{i, j=1}^{m} h\left(\widetilde{\nabla}_{e_{i}} W^T, d u\left(e_{j}\right)\right) h\left(d u\left(e_{i}\right), \widetilde{\nabla}_{e_{j}} W^T\right) d v_{g} \\
		&	+\int_{M} F^{\prime}\left(\frac{\left\|u^{*} h\right\|^{2}}{4}\right) \sum_{i, j=1}^{m} h\left(R^{N}\left(W^T, d u\left(e_{i}\right)\right) W^T, d u\left(e_{j}\right)\right) h\left(d u\left(e_{i}\right), d u\left(e_{j}\right)\right) d v_{g}.
	\end{split}
\end{equation}

Let $\mathcal{W}:=\left\{W \in \Gamma(E) ; \nabla^{\prime} W=0\right\}$, then $\mathcal{W}$ with natural inner product is isomorphic to $\mathbf{R}^{n+1}$.

Taking an orthonormal basis $\left\{W_{1}, W_{2}, \ldots, W_{n}, W_{n+1}\right\}$ of $\mathcal{W}$ with respect to its natural inner product such that $ W_1,W_2, \cdots, W_n $ is tangent to $N$, we obtain

	Meanwhile, we observe that
	
	\begin{equation}\label{}
		\begin{split}
				\tilde{\nabla}_{e_i} W^T &={ }^N \nabla_{u_* e_i} W^T \\
			&=\nabla_{u_* e_i}^{\prime \prime} W^T+\left\langle W^T, u_* e_i\right\rangle e \\
			&=\nabla_{u_* e_i}^{\prime \prime}(W-\langle W, e\rangle e)+\left\langle W^T, u_* e_i\right\rangle e \\
			&=\left(\nabla_{u_* e_i}^{\prime \prime} W\right)^T-\langle W, e\rangle u_* e_i .
		\end{split}
	\end{equation}
	
	notice that 
	\begin{equation}\label{586}
		\begin{split}
			|\left(\nabla_{u_{*} e_i}^{\prime \prime} W\right)^T | =|\sum_{j=1}^{n}\left \langle \nabla_{u_{*} e_i}^{\prime \prime}\left( W\right) , W_j\right \rangle W_j|\leq \frac{nk_3(\delta)}{2}|u_*(e_i)| |W|.
		\end{split}
	\end{equation}
	
	Then, we have
	
\begin{equation}\label{}
	\begin{split}
		\sum_{i=1}^m\left|\tilde{\nabla}_{e_i} W^T\right|^2=& \sum_{i=1}^m\left|\left(\nabla_{u_*{e_i} }^{\prime \prime} W\right)^T\right|^2+\langle W, e\rangle^2|\mathrm{~d} u|^2 \\
		&-2 \sum_{i=1}^m\langle W, e\rangle\left\langle\nabla_{u_* e_i}^{\prime \prime} W^T, u_* e_i\right\rangle .
	\end{split}
\end{equation}
	
By  \eqref{586}
	\begin{equation*}
		\begin{split}
			\left \langle \tilde{\nabla}_{e_i} W^T,\sigma_{u}(e_i) \right \rangle =&	\left \langle \tilde{\nabla}_{e_i} W^T,du(e_j) \right \rangle  h(du(e_i),du(e_j))\\
			&\leq \lambda |\tilde{\nabla}_{e_i} W^T| |du| .
		\end{split}
	\end{equation*}
	So, \begin{equation*}
		\begin{split}
			\sum_{ij}	\left \langle \tilde{\nabla}_{e_i} W^T,\sigma_{u}(e_i) \right \rangle\left \langle \tilde{\nabla}_{e_j} W^T,\sigma_{u}(e_j) \right \rangle \leq  \lambda^2 |\tilde{\nabla}_{e_i} W^T|^2 |du|^2.
		\end{split}
	\end{equation*}
Since $ u $ is conformal map with conformal factor $ \lambda, $	
	\begin{equation*}
		\begin{split}
			\left \langle	\tilde{\nabla}_{e_i} W^T  ,\tilde{\nabla}_{e_j} W^T \right \rangle h(du(e_i),du(e_j)) \leq  \lambda |\tilde{\nabla}_{e_i} W^T|^2 .
		\end{split}
	\end{equation*}
By \eqref{586},
\begin{equation*}
	\begin{split}
		&\sum_{ij}\left \langle \tilde{\nabla}_{e_i} W^T,du(e_j) \right \rangle \left \langle \tilde{\nabla}_{e_i} W^T,du(e_j) \right \rangle \leq |\tilde{\nabla}_{e_i} W^T|^2 |du|^2
	\end{split}
\end{equation*}
Similarly, we also have 
\begin{equation*}
	\begin{split}
		&\sum_{ij}\left \langle \tilde{\nabla}_{e_i} W^T,du(e_j) \right \rangle \left \langle \tilde{\nabla}_{e_j} W^T,du(e_i) \right \rangle \leq |\tilde{\nabla}_{e_i} W^T|^2 |du|^2
	\end{split}
\end{equation*}
	Since $N$ is $\delta$-pinched, similar to \cite[(5.4)]{MR2259738}, we have 
	$$ h\left(R^N\left(W^T, u_* e_i\right) u_* e_i, W^T\right)h(du(e_i),du(e_j)) \geq \frac{2\lambda \delta}{1+\delta}\left\{\left|W^T\right|^2\left|u_* e_i\right|^2-\left\langle W^T, u_* e_i\right\rangle^2\right\}. $$
	Substituting ()() into (\ref{ckj}), we obtain
	\begin{equation*}
		\begin{split}
			I\left(W^T, W^T\right) \leq& \int_M F^{\prime}\left(\frac{|u^* h|^2}{2}\right) \lambda q(W) \mathrm{d}v_g,
		\end{split}
	\end{equation*}
	
where 
\begin{equation*}
	\begin{split}
		q(W)=&\left(\frac{4}{\lambda}c_{F}+2m+1\right)\left\{\sum_{i=1}^{m}\left|\left(\nabla_{u_{e} e_{i}}^{\prime \prime} W\right)^{T}\right|^{2}+\langle W, e\rangle^{2}|\mathrm{~d} u|^{2}\right.\\
			&\left.\quad-2 \sum_{i=1}^{m}\langle W, e\rangle\left\langle\nabla_{u_{*} e_{i}}^{\prime \prime} W^{T}, u_{*} e_{i}\right\rangle\right\} \\
			&-\frac{2 \delta}{1+\delta} \sum_{i=1}^{m}\left\{\left|W^{T}\right|^{2}\left|u_{*} e_{i}\right|^{2}-\left\langle W^{T}, u_{*} e_{i}\right\rangle^{2}\right\} .
	\end{split}
\end{equation*}
For the  quadratic form $q$ on $\mathcal{W}$, by the argument in  \cite{MR2259738}, we have 
\begin{equation*}
	\begin{split}
		\operatorname{trace}(q)\leq \Phi_{n,F}(\delta)|du |^2.
	\end{split}
\end{equation*}
where 
\begin{equation*}
	\begin{split}
		\Phi_{n,F}(\delta)=\left(\frac{4}{m\lambda} c_{F}+1\right)\left\{\frac{n}{4} k_{3}^{2}(\delta)+k_{3}(\delta)+1\right\}-\frac{2 \delta}{1+\delta}(n-1).
	\end{split}
\end{equation*}
Taking traces , we get 	
	\begin{equation*}
		\begin{split}
			\operatorname{Tr}_g I \leq& \int_{M} F^{\prime}\left(\frac{|u^*h|^2}{4}\right)\lambda\Phi_{n,F}(\delta)|du |^2 \d v_g.
					\end{split}
	\end{equation*}
In the case that $ F^{\prime\prime}=F^{\prime},$ 
	\begin{equation*}
		\begin{split}
			\operatorname{Tr}_g I \leq& \int_{M} F^{\prime}\left(\frac{|u^*h|^2}{4}\right)\lambda\bigg[\left(\lambda^2m+2m+1\right)\left\{\frac{n}{4} k_{3}^{2}(\delta)+k_{3}(\delta)+1\right\}-\frac{2 \delta}{1+\delta}(n-1)\bigg]|du |^2 \d v_g.
		\end{split}
	\end{equation*}
	
\end{proof}
%
%

Next, we show the Howard type theorem for $F$-symphonic map with potential. In \cite{MR1145657}, Takeuchi obtained Howard type theroem for $p$ harmonic map. Later, in \cite{ara2001stability}, Ara generalized Takeuchi's result to $F$ harmonic map. 
	\begin{thm}
		Let $F:[0, \infty) \rightarrow[0, \infty)$ be a $C^{2}$ strictly increasing function, $\nabla^2 H \leq 0$ . Let $N$ be a compact simply-connected $\delta$-pinched $n$-dimensional Riemannian manifold. Assume that $n$ and $\delta$ satisfy $n>\frac{4}{m\lambda}c_F+\lambda+2m+1$ and
		\[
		\Psi_{n, F}(\delta):=\int_{0}^{\pi}\left\{\left(\frac{4}{m\lambda}c_F+\lambda+2m\right) g_{2}(t, \delta)\left(\frac{\sin \sqrt{\delta} t}{\sqrt{\delta}}\right)^{n-1}-(n-1) \delta \cos ^{2}(t) \sin ^{n-1}(t)\right\} d t<0,
		\]
		where $g_{2}(t, \delta)=\max \left\{\cos ^{2}(t), \delta \sin ^{2}(t) \cot ^{2}(\sqrt{\delta} t)\right\}$. Then for any compact Riemannian manifold $M$, 	every  $\Phi_{sym}$-stable conformal $ F $-symphonic map with potential $u: M \rightarrow N$ with conformal factor $ \lambda $  is constant.
	\end{thm}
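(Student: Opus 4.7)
The plan is to adapt the averaging-over-target approach of the earlier Howard-type theorem (for $F$-harmonic maps with two-form and potential) to the $F$-symphonic setting, where the second variation formula \eqref{second} now contains five quadratic terms in $\widetilde{\nabla}V$ coupled with $du$ and $\sigma_u$, plus the curvature term and the Hessian of $H$. For each $y \in N$ I would take $V^y = \nabla f \circ \rho_y$ with $\rho_y$ the distance from $y$ and $f$ the function on $[0,\pi]$ constructed in \cite{ara2001stability}, approximate by smooth $V^y_k$, substitute $V=W=V^y_k$ into the index form, integrate against $\d v_N(y)$, pass to the limit in $k$, and show that the averaged index is negative.

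The main computational step is bounding each of the five $F'$-terms and the $F''$-term in \eqref{second} using the conformality hypothesis $h(du(e_i), du(e_j)) = \lambda^2 \delta_{ij}$ together with the Hessian comparison $|\widetilde{\nabla}_{e_i} V^y|^2 \leq g_2(\rho, \delta)|u_* e_i|^2$ that holds on a simply-connected $\delta$-pinched manifold. Under conformality one has $\sigma_u(e_i) = \lambda^2 du(e_i)$, so $\langle \widetilde{\nabla} V, \sigma_u\rangle^2$ collapses to a multiple of $|\widetilde{\nabla}V|^2|du|^2$ and the $F''$ contribution is absorbed via $F''(x)x\leq c_F F'(x)$ into the $\tfrac{4}{m\lambda}c_F$ part of the coefficient. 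The remaining four $F'$-terms coupling $\widetilde{\nabla}V$ with $du$ each produce a multiple of $g_2(\rho,\delta)\lambda^2 |du|^2$ after summation in $i,j$, and combining them I expect to recover the overall coefficient $\tfrac{4}{m\lambda}c_F+\lambda+2m$ in front of $g_2(\rho,\delta)|du|^2$.

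For the curvature term, the $\delta$-pinching gives
\[
\sum_{i,j} h(R^N(W,du(e_i))W, du(e_j))\, h(du(e_i),du(e_j)) \geq \tfrac{2\delta}{1+\delta}\lambda^2\sum_i\bigl\{|W|^2|u_* e_i|^2-\langle W,u_* e_i\rangle^2\bigr\},
\]
and averaging in $y$ using \eqref{ck} and \eqref{ckk} from \cite[(3.3),(3.4)]{ara2001stability} produces the term $-(n-1)\delta\cos^2(\rho)\sin^{n-1}(\rho)$ in the integrand. Assembling the pieces and noting that $\nabla^2 H \leq 0$ contributes non-positively, the averaged index form is bounded above by a positive multiple of $\int_M F'(\tfrac{|u^*h|^2}{4})\lambda |du|^2 \operatorname{Vol}(S^{n-1})\Psi_{n,F}(\delta)\,\d v_g$; by $\Psi_{n,F}(\delta)<0$ and $\Phi_{sym}$-stability this forces $|du|\equiv 0$, so $u$ must be constant.

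The step I expect to be the main obstacle is the careful bookkeeping of the five $F'$-terms under conformality: the $\sigma_u$ factors introduce cross terms between different indices $i,j$, and I must verify that the cumulative coefficient in front of $g_2(\rho,\delta)|du|^2$ is exactly $\tfrac{4}{m\lambda}c_F+\lambda+2m$ rather than something larger, and that the conformal factor $\lambda$ enters linearly (not quadratically) in this final coefficient so that the hypothesis matches the statement.
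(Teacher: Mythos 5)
Your proposal follows essentially the same route as the paper: average the index form over $V^y=\nabla f\circ\rho_y$ (approximated by $V^y_k$), use conformality to collapse the $\sigma_u$-couplings, absorb the $F''$ term via $F''(x)x\le c_F F'(x)$, bound $|\widetilde{\nabla}_{e_i}V^y|^2$ by the Hessian comparison $g_2(\rho,\delta)|u_*e_i|^2$, and combine the pinching estimate with \cite[(3.3),(3.4)]{ara2001stability} to conclude from $\Psi_{n,F}(\delta)<0$. The one point to align is the normalization of the conformal factor, which you correctly flag: the paper takes $h(du(e_i),du(e_j))=\lambda\,\delta_{ij}$ (so $|du|^2=m\lambda$ and $\sigma_u(e_i)=\lambda\,du(e_i)$), not $\lambda^2\delta_{ij}$, and this is exactly what makes $\lambda$ enter linearly in the coefficient $\frac{4}{m\lambda}c_F+\lambda+2m$.
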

	
	\begin{proof} 
		We follow \cite{ara2001stability}, let $ V^{y}=\nabla f\circ \rho_y,$  we can approximate $ f $ by $ f_k $, one can refer to \cite{ara2001stability} for the construction of $ f_k $.  Let   $ V_k^{y}=\nabla f_k\circ \rho_y  $. Then $ V_k^y $ converges uniformly to $ V^y $ as $ k\to \infty.$ By the second variation formula for $ F $-symphonic map with potential, we have 
		\begin{equation*}
			\begin{split}
				I(V^y,V^y)
				\leq 
				&\int_{M} F^{\prime \prime}\left(\frac{\left\|u^{*} h\right\|^{2}}{4}\right) \left\langle \widetilde{\nabla} V^y, \sigma_{u}\right\rangle \left\langle \widetilde{\nabla} V^y, \sigma_{u}\right\rangle d v_{g} \\
				&	+\int_{M} F^{\prime}\left(\frac{\left\|u^{*} h\right\|^{2}}{4}\right) \sum_{i, j=1}^{m} h\left(\widetilde{\nabla}_{e_{i}} V^y, \widetilde{\nabla}_{e_{j}} V^y\right) h\left(d u\left(e_{i}\right), d u\left(e_{j}\right)\right) d v_{g} \\
				&	+\int_{M} F^{\prime}\left(\frac{\left\|u^{*} h\right\|^{2}}{4}\right) \sum_{i, j=1}^{m} h\left(\widetilde{\nabla}_{e_{i}} V^y, d u\left(e_{j}\right)\right) h\left(\widetilde{\nabla}_{e_{i}} V^y, d u\left(e_{j}\right)\right) d v_{g} \\
				&	+\int_{M} F^{\prime}\left(\frac{\left\|u^{*} h\right\|^{2}}{4}\right) \sum_{i, j=1}^{m} h\left(\widetilde{\nabla}_{e_{i}} V^y, d u\left(e_{j}\right)\right) h\left(d u\left(e_{i}\right), \widetilde{\nabla}_{e_{j}} V^y\right) d v_{g} \\
				&	+\int_{M} F^{\prime}\left(\frac{\left\|u^{*} h\right\|^{2}}{4}\right) \sum_{i, }^{m} h\left(R^{N}\left(V^y, d u\left(e_{i}\right)\right) V^y, d u\left(e_{i}\right)\right) \lambda d v_{g},\\
						\end{split}
		\end{equation*}

	Notice that 
	\begin{equation*}
		\begin{split}
			\left\langle \widetilde{\nabla} V^y, \sigma_{u}\right\rangle \left\langle \widetilde{\nabla} V^y, \sigma_{u}\right\rangle=\lambda |\widetilde{\nabla}_{e_i} V^y |^2.
		\end{split}
	\end{equation*}

	Moreover, by \cite[(3.7)]{ara2001stability}, we have 
\begin{equation*}
	\begin{split}
		\sum_{i, j=1}^{m} h\left(\widetilde{\nabla}_{e_{i}} V^y, d u\left(e_{j}\right)\right) h\left(d u\left(e_{i}\right), \widetilde{\nabla}_{e_{j}} V^y\right)\leq g_2(\rho,\delta)|du|^4.
	\end{split}
\end{equation*}
Thus, we have 
\begin{equation*}
	\begin{split}
			I(V^y,V^y)\leq & 	\int_{M} F^{\prime}\left(\frac{\left\|u^{*} h\right\|^{2}}{4}\right) \sum_{i=1 }^{m}   \bigg[\frac{4}{m\lambda}c_F|\tilde{\nabla}_{e_i} V^y|^2+\lambda|\tilde{\nabla}_{e_i} V^y|^2\\
		&+2g_2(\rho,\delta)|du|^4+\lambda h\left( R^{N}\left(V^y, d u\left(e_{i}\right)\right) V^y, d u\left(e_{i}\right)\right)\bigg] d v_{g}.
	\end{split}
\end{equation*}
	Taking limits, we have 	
		\begin{equation*}
			\begin{split}
				&\lim\limits_{k\to \infty}	\int_N  I(V_{k}^{y},V_{k}^{y})dv_N(y)\\
				\leq & \int_N	\int_{M} F^{\prime}\left(\frac{\left\|u^{*} h\right\|^{2}}{4}\right) \sum_{i, }^{m}  \bigg[(\frac{4}{m\lambda}c_F+\lambda)|\tilde{\nabla}_{e_i} V^y|^2+\lambda h\left( R^{N}\left(V^y, d u\left(e_{i}\right)\right) V^y, d u\left(e_{i}\right)\right)\\
				&+2g_2(\rho,\delta)|du|^4\bigg] d v_{g} dv_N(y).
			\end{split}
		\end{equation*}
By \cite[(3.3),(3.4)]{ara2001stability}, we have 
		\begin{equation*}
			\begin{split}
				&\lim\limits_{k\to \infty}	\int_N  I(V_{k}^{y},V_{k}^{y})dv_N(y)\\
							\leq & 	\int_{M} F^{\prime}\left(\frac{\left\|u^{*} h\right\|^{2}}{4}\right)\bigg( \bigg[  (\frac{4}{m\lambda}c_F+\lambda)g_2(\rho,\delta) |du |^2Vol(S^{n-1})\left(\frac{\sin(\sqrt{\delta}t)}{\sqrt{\delta}} \right)^{n-1}\\
				&\quad\quad\quad\quad\quad\quad\quad\quad -\lambda(n-1)|du |^2Vol(S^{n-1}) \int_{0}^{\pi}\delta\cos^2(\rho)\sin^{n-1}(\rho)d\rho\bigg]\\
				&\quad\quad\quad\quad\quad\quad\quad\quad+2g_2(\rho,\delta)|du|^4 \left(\frac{\sin(\sqrt{\delta}t)}{\sqrt{\delta}} \right)^{n-1}Vol(S^{n-1})\bigg)d v_{g} .
			\end{split}
		\end{equation*}
		using the fact that $ |du |^2=\lambda m,$  by the same argument in \cite{{ara2001stability}}, we can get a contradiction.
	\end{proof}

Motivated by the method in \cite{torbaghan2022stability}, we can establish  
\begin{thm}\label{thm2.1}
	Let $u:\left(M^{m}, g\right) \longrightarrow\left(N^{n}, h\right)$ be a nonconstant conformal $F$-symphonic map with potential $ H $. Suppose that $\nabla^2 H \leq 0$  and the conformal factor of $ u $ is $ \lambda $.  Let $ \theta(v)=\sum_{a=1}^n |B(v,v_a) |^2 $, $h(x)=\max\{|B(v,v) |^2: v\in T_xN, |v|=1\}. $   For each $x \in N$ and any unit vector $v \in T_{x} N$, Ricci tensor of $ N $ satisfies 	
	\[
		\frac{8m}{\lambda}c_F  h+|A|^2+\theta(v)<\operatorname{Ric}(v, v).
	\]
 Then, $u$ is $ \Phi_{sym} $-unstable.
\end{thm}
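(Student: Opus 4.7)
The plan is to adapt the technique used in Theorem 3.1 (the $F$-harmonic analogue) together with the bookkeeping from the $\Phi$-SSU target case (following~\cite{2021The}) to the symphonic setting. Isometrically immerse $N^n \hookrightarrow \mathbb{R}^{n+p}$ with second fundamental form $B$ and shape operator $A$, and fix a parallel orthonormal frame $\{\omega_D\}_{D=1}^{n+p}$ of $\mathbb{R}^{n+p}$. I would take the tangential components $V_D := \omega_D^\top$ along $u$ as test variations in the index form~\eqref{second}. The standard identity $\widetilde{\nabla}_{e_i}\omega_D^\top = A^{\omega_D^\perp}(du(e_i))$ converts all covariant derivatives of $V_D$ into $B$-expressions, and summing over $D$ exploits the completeness of the frame: $\sum_D |A^{\omega_D^\perp}(X)|^2 = \sum_a |B(X,v_a)|^2 = \theta(X)$ for a unit vector $X\in TN$, and $\sum_D \langle B(X,X),\omega_D^\perp\rangle^2 = |B(X,X)|^2 \leq h(u(x))$.

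Plugging $V = W = V_D$ into~\eqref{second}, invoking conformality (so that $\sigma_u(e_j) = \lambda\,du(e_j)$ and $\|u^*h\|^2$ is a constant multiple of $\lambda^{2}m$), and then summing over $D$, each of the six geometric terms reduces to an expression in $\theta$, $|A|^2$, $h$, and $\operatorname{Ric}^N$ evaluated at unit tangent vectors $\nu_i := du(e_i)/|du(e_i)|$, where the curvature term supplies the $-\operatorname{Ric}^N(\nu_i,\nu_i)$ contribution via the Gauss equation. The $F''$ quadratic term $\langle\widetilde{\nabla}V_D,\sigma_u\rangle^2$ is the key nonlinear contribution: bounding it by $F''(t)\,t \leq c_F F'(t)$ at $t = \|u^*h\|^2/4$ converts it into a constant multiple of $c_F F'\,h(u(x))$, and the constant becomes exactly $8m/\lambda$ once the powers of $\lambda$ coming from $\sigma_u$ and from the normalization are collected. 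The potential term is nonpositive by $\nabla^2 H \leq 0$. Assembling these estimates, the trace of the index form is bounded above by a positive multiple of
\[
\int_M F'\!\left(\tfrac{\|u^*h\|^2}{4}\right)\sum_i \left[\tfrac{8m}{\lambda}c_F\, h + |A|^2 + \theta(\nu_i) - \operatorname{Ric}^N(\nu_i,\nu_i)\right] dv_g,
\]
which is strictly negative by the curvature hypothesis and the nonconstancy of $u$, so at least one $V_D$ must satisfy $I(V_D,V_D)<0$, proving $\Phi_{sym}$-instability.

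The main obstacle will be the algebraic bookkeeping across the seven terms in~\eqref{second} once the summation over $D$ is performed. In particular, one must carefully distinguish which contractions with $B$ produce $\theta$ (a single trace in one slot) versus $|A|^2$ (a double trace), and track the sign in the Gauss equation so that a single clean $-\operatorname{Ric}$ term emerges rather than a residual mixture of $\operatorname{Ric}$ and additional $B$-quadratic terms. A secondary subtlety is that the $V_D$ are not globally tangent to $N$, so the residual $\omega_D^\perp$ pieces that appear during the expansion must be absorbed correctly into the $B$-terms rather than silently discarded. Once this combinatorics is carried out faithfully, the remaining estimates are routine applications of $F''(t)t\leq c_F F'(t)$ and the Cauchy--Schwarz inequality.
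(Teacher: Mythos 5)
Your proposal follows essentially the same route as the paper's own proof: the paper likewise takes $V=\omega^{\top}$ for parallel vector fields $\omega$ of the ambient $\mathbb{R}^{n+p}$, uses the identity $h(\widetilde{\nabla}_{e_i}\omega^{\top},du(e_j))=\langle B(du(e_i),du(e_j)),\omega^{\perp}\rangle$ to convert every term of the second variation into expressions in $B$, sums over the frame to produce $\theta$, $|A|^2$ and $-\operatorname{Ric}$, and controls the $F''$ term via $F''(t)t\le c_F F'(t)$ together with conformality. The bookkeeping issues you flag are real (and the paper itself is terse about them), but the approach is the same.
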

\begin{proof} Here we modify the proof of \cite[Theorem 10]{torbaghan2022stability}. Let $ \{e_i\}_{i=1}^{m} $ be a local orthonormal frame field
	on $ M $ and $ \omega $ be a parallel vector field in $ R^{n+p} $. We take $V=\omega^\top $ in the index form of $ F $-symphonic map with potential (c.f.  \cite{ara2001stability}), 
	\begin{equation}
		\begin{split}
		I(\omega^\top,\omega^\top)=&-\int_{M} \operatorname{Hess}H(\omega^\top, \omega^\top) d v_{g} \\
			&+\int_{M} F^{\prime \prime}\left(\frac{\left\|u^{*} h\right\|^{2}}{4}\right)\left\langle \widetilde{\nabla} \omega^\top, \sigma_{u}\right\rangle \left\langle \widetilde{\nabla} \omega^\top, \sigma_{u}\right\rangle d v_{g} \\
			&	+\int_{M} F^{\prime}\left(\frac{\left\|u^{*} h\right\|^{2}}{4}\right) \sum_{i, j=1}^{m} h\left(\widetilde{\nabla}_{e_{i}} \omega^\top, \widetilde{\nabla}_{e_{j}} \omega^\top\right) h\left(d u\left(e_{i}\right), d u\left(e_{j}\right)\right) d v_{g} \\
			&	+\int_{M} F^{\prime}\left(\frac{\left\|u^{*} h\right\|^{2}}{4}\right) \sum_{i, j=1}^{m} h\left(\widetilde{\nabla}_{e_{i}} \omega^\top, d u\left(e_{j}\right)\right) h\left(\widetilde{\nabla}_{e_{i}} \omega^\top, d u\left(e_{j}\right)\right) d v_{g} \\
			&	+\int_{M} F^{\prime}\left(\frac{\left\|u^{*} h\right\|^{2}}{4}\right) \sum_{i, j=1}^{m} h\left(\widetilde{\nabla}_{e_{i}} \omega^\top, d u\left(e_{j}\right)\right) h\left(d u\left(e_{i}\right), \widetilde{\nabla}_{e_{j}} \omega^\top\right) d v_{g} \\
			&	+\int_{M} F^{\prime}\left(\frac{\left\|u^{*} h\right\|^{2}}{4}\right) \sum_{i, j=1}^{m} h\left(R^{N}\left(\omega^\top, d u\left(e_{i}\right)\right) \omega^\top, d u\left(e_{j}\right)\right) h\left(d u\left(e_{i}\right), d u\left(e_{j}\right)\right) d v_{g},
		\end{split}
	\end{equation}

So, by \cite[(30)(31)]{torbaghan2022stability}, we can get 
\begin{equation}\label{pok}
	\begin{split}
		h\left(\widetilde{\nabla}_{e_{i}} \omega^\top, d u\left(e_{j}\right)\right)= \left \langle B(du(e_i),du(e_j)), \omega^\perp \right \rangle 
	\end{split}
\end{equation}
Thu, we have 
\begin{equation}\label{}
	\begin{split}
		I(\omega^\top,\omega^\top)\leq &\jifen{F^{\prime\prime}\left(\frac{\left\|u^{*} h\right\|^{2}}{4}\right) \left \langle B(du(e_i),du(e_i)), \omega^\perp \right \rangle^2 \lambda^2 }\\
		&+\int_{M} F^{\prime}\left(\frac{\left\|u^{*} h\right\|^{2}}{4}\right)\bigg\{\lambda|A^{\omega^\perp}(du(e_i)) |^2\\
		&+F^{\prime}\left(\frac{\left\|u^{*} h\right\|^{2}}{4}\right) 2\left \langle B(du(e_i),du(e_j)), \omega^\perp \right \rangle^2  \d v_g+\sum_{t=1}^{m} h( R^N\left(\omega^\top, u_* e_{i}\right) \omega^\top,u_* e_{i} )\bigg\} \d v_{g}\\
	\end{split}
\end{equation}
Thus, set $ v_i=\frac{du(e_i)}{|du(e_i)|}, $ as in \cite[Theorem 10]{torbaghan2022stability}		
\begin{equation*}
	\begin{split}
		\operatorname{Tr}_g I\leq &\jifen{F^{\prime\prime}\left(\frac{\left\|u^{*} h\right\|^{2}}{4}\right) |B(du(e_i),du(e_i))|^2  }+ \int_M  F^{\prime }(\frac{|d u|^{2}}{2}) |du|^4 |A|^2 \d v_g\\
		&+\int_{M} F^{\prime}\left(\frac{\left\|u^{*} h\right\|^{2}}{4}\right) \cdot\left\{|\theta(v_i) -Ric(v_i,v_i)\right\}\d v_{g}\\
		\leq &\jifen{F^{\prime\prime}\left(\frac{\left\|u^{*} h\right\|^{2}}{4}\right) |du |^4 h(x)  }+\int_M  F^{\prime }(\frac{|d u|^{2}}{2}) |du|^4 |A|^2 \d v_g\\
		&+\int_{M} F^{\prime }(\frac{|d u|^{2}}{2})\sum_{i}|du(e_i) |^2 \cdot\left\{|\theta(v_i) -Ric(v_i,v_i)\right\} \d v_{g}\\
	\end{split}
\end{equation*}
By the assumption, we get 
\[
2\frac{F^{\prime\prime}\left(\frac{\left\|u^{*}h\right\|^{2}}{4}\right)}{F^{\prime}\left(\frac{\left\|u^{*}h\right\|^{2}}{4}\right)}|du|^2 h+|A|^2+\theta(v)<\operatorname{Ric}(v, v).
\]
\end{proof}

\begin{thm}\label{thm2.2} 	Let $u:\left(M^{n-1}, g\right) \longrightarrow\left(N^{n}, h\right)$ be a nonconstant $F$-symphonic map with potential $ H $. If u is totally gedesic immersion, then $ u $ is  $\Phi_{sym} $-unstable if the Ricci curvature
	of  $ N^n $ is positive.
\end{thm}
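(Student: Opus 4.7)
The plan is to mirror the proof of Theorem \ref{thm2.2} in Section \ref{sec2}. Since $u:(M^{n-1},g)\to (N^n,h)$ is an isometric immersion of codimension one, the normal bundle is a line bundle over $M$, so (locally, or globally after passing to an orientable double cover) I can pick a unit normal section $V$ along $u$. I would plug this $V$ into the index form \eqref{second} and show that $I(V,V)<0$ when $\operatorname{Ric}^N>0$.

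The first key step is to observe that the totally geodesic hypothesis forces $\tilde{\nabla}V=0$ along $u$. Indeed, the vanishing of the second fundamental form gives $(\nabla^N_{du(e_i)}V)^T=-A_V(du(e_i))=0$, while differentiating $h(V,V)=1$ yields $h(\nabla^N_{du(e_i)}V,V)=0$. Because the normal bundle is one-dimensional, the normal component must also vanish, so $\tilde{\nabla}_{e_i}V=\nabla^N_{du(e_i)}V=0$ for every $i$. Once this is in hand, every summand in \eqref{second} that carries a factor $\tilde{\nabla}_{e_k}V$ (the $F''$-term with $\langle\tilde{\nabla}V,\sigma_u\rangle^2$ and the next four $F'$-integrals) vanishes identically.

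The second key step is to reduce the curvature integral to a Ricci integral. Since $u$ is isometric, $\{du(e_i)\}_{i=1}^{n-1}$ is an orthonormal family in $TN$ satisfying $h(du(e_i),du(e_j))=\delta_{ij}$, and together with $V$ it completes to an orthonormal basis of $T_{u(x)}N$. Using the standard antisymmetry of $R^N$ in the last two arguments,
\begin{equation*}
\sum_{i,j=1}^{n-1}h\!\bigl(R^N(V,du(e_i))V,du(e_j)\bigr)h(du(e_i),du(e_j))=\sum_{i=1}^{n-1}h\!\bigl(R^N(V,du(e_i))V,du(e_i)\bigr)=-\operatorname{Ric}^N(V,V).
\end{equation*}
Consequently the index form collapses to
\begin{equation*}
I(V,V)=\int_M\nabla^2 H(V,V)\,dv_g-\int_M F'\!\left(\tfrac{\|u^*h\|^2}{4}\right)\operatorname{Ric}^N(V,V)\,dv_g,
\end{equation*}
just as in the proof of Theorem \ref{thm2.2}. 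Invoking the running assumption $\nabla^2H\le 0$ that governs this section, the positivity of $\operatorname{Ric}^N$ together with $u$ nonconstant (so $F'>0$ on a set of positive measure) makes $I(V,V)<0$, which establishes $\Phi_{sym}$-instability.

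The only real bookkeeping obstacle is making sure all six $\tilde{\nabla}V$-type summands of the symphonic second variation are genuinely killed by $\tilde{\nabla}V=0$; this is essentially a visual check of \eqref{second}. The orthonormal-frame trick for reducing the curvature term to $-\operatorname{Ric}^N(V,V)$ is routine, and no pinching or subtle curvature estimate is needed — the whole argument rests on the vanishing of $\tilde{\nabla}V$ coming from total geodesicity combined with codimension one.
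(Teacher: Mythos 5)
Your proof follows essentially the same route as the paper's: take the unit normal field $V$ of the codimension-one immersion, use total geodesicity to conclude $\tilde{\nabla}V=0$, observe that this kills every derivative term in \eqref{second}, and reduce the remaining curvature integral to $-\operatorname{Ric}^N(V,V)$ via the isometric-immersion identity $h(du(e_i),du(e_j))=\delta_{ij}$. You are in fact slightly more careful than the paper, whose own proof silently drops the $\int_M \nabla^2 H(V,V)\,dv_g$ term (and carries a spurious factor $\lambda$); your explicit appeal to the running assumption $\nabla^2 H\le 0$ is genuinely needed for the stated conclusion.
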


\begin{proof}
Here we modify the proof of \cite[Theorem 10]{torbaghan2022stability}. Let $ V $ be a unit normal vector field of $ M^{n-1}  $ in
$ N^n $.	Let $ I(V,V) $  be the index form of $F$-symphonic map with potential $ H $. 
	Since $ u $ is the totally geodesic, then  by \cite[(40)]{torbaghan2022stability}			
	\begin{equation*}
		\begin{split}
			\tilde{\nabla} V=0.
		\end{split}
	\end{equation*}
	So, we have 
	\begin{equation*}
		\begin{split}
			I(V,V)=-\int_{M}  F^{\prime}\left(\frac{\left\|u^{*}h\right\|^{2}}{4}\right) Ric(V,V) \lambda \d v_g.
		\end{split}
	\end{equation*}
\end{proof}

　\bibliographystyle{plain}
\bibliography{mybib2022}

\end{document}